\newcommand{\SELF}[1]{\todo[color=green!40]{#1}} 
\newcommand{\OMIT}[1]{\todo[color=gray!30]{#1}}  
\newcommand{\CITE}[1]{\todo[color=cyan!30]{#1}}  
\newcommand{\FUTURE}[1]{\todo[color=purple!30]{#1}}  
\newtheorem{theorem}{Theorem}[section]
\newtheorem{proposition}[theorem]{Proposition}
\newtheorem{corollary}[theorem]{Corollary}
\newtheorem{lemma}[theorem]{Lemma}
\theoremstyle{definition}
\newtheorem{definition}[theorem]{Definition} 
\newtheorem{example}[theorem]{Example} 
\newtheorem*{notation}{Notation} 
\newtheorem{remark}[theorem]{Remark}
\def\R			{\mathds{R}}
\def\C			{\mathds{C}}
\def\F			{\mathds{F}}
\def\Id			{\mathds{1}}
\def\OO			{\mathcal{O}}
\def\VV			{\mathcal{V}}
\def\AA			{\mathcal{A}}
\def\PP			{\mathcal{P}}
\def\wrt					{w.r.t.\ }
\def\resp					{resp.\ }
\def\im	 					{\mathrm{i}}				
\newcommand\inner[1] 		{\langle #1 \rangle}		
\def\rwedge			{\mathbin{\vartriangle}}
\def\cwedge			{\wedge}
\DeclareMathOperator{\Span}{span}
\DeclareMathOperator{\Real}{Re}
\DeclareMathOperator{\Imag}{Im}
\DeclareMathOperator{\diag}{diag}
\DeclareMathOperator{\sgn}{sgn}
\begin{document}

\title[Hermitian Geometry of Complex Multivectors and Determinants]{Hermitian Geometry of Complex Multivectors, Determinants and Orientations}

\author{Andr\'e L. G. Mandolesi }
\address{Instituto de Matem\'atica e Estat\'istica \\ Universidade Federal da Bahia \\ Av. Milton Santos s/n \\ 40170-110 Salvador - BA \\ Brazil. \\ ORCID 0000-0002-5329-7034}
\email{andre.mandolesi@ufba.br}

\date{\today}

\begin{abstract}
	Two geometric interpretations for complex multivectors and determinants are presented: a little known one in terms of square roots of volumes, and a new one using fractions of volumes. The fraction is determined by a new holomorphy index, which measures the lack of holomorphy of real subspaces of $\C^n$ via generalized Kähler angles or a disjointness angle, and relates real and complex exterior products.	The interpretations are completed with a natural but uncommon concept of complex orientation, related to elementary complex transformations. We also propose graphical representations for complex blades (decomposable multivectors) as fractions of parallelotopes, and discuss how Clifford algebras relate (or not) to Hermitian geometry.
\end{abstract}

\keywords{complex; multivector; determinant; orientation; holomorphy}

\subjclass{15A15;15A66;15A75;51M25}

\maketitle

\section{Introduction}

Real and complex Grassmann exterior algebras \cite{Bourbaki1989,Greub1978,Shaw1983} are widely used in geometry, analysis, representation theory, physics, etc., specially via differential forms \cite{Frankel2011,Griffiths1994,Huybrechts2004,Rosen2019}. 
Forms or multi-covectors are usually interpreted as alternating multilinear functions, algebraic properties link them to determinants, and Jacobians make them well suited for calculus on manifolds. 

In the real case, determinants and blades (decomposable multivectors) also have useful geometric interpretations in terms of oriented volumes or parallelotopes.
And real Clifford geometric algebras provide even more powerful ways to explore the geometry of real multivectors \cite{Corrochano2001,Hestenes1984,Rosen2019}.

In the complex case, the relation with volumes and orientations is less obvious.
This can be a problem even in real settings:
e.g., complex blades can appear in the invariant decomposition of a real bivector \cite{Roelfs2023}.
And while complex Clifford algebras have important applications \cite{Hrdina2022,Sabadini2014,Stoica2018}, they lack a clear connection to the geometry of general Hermitian spaces (complex spaces with Hermitian product, but no real structure providing complex conjugation), which are important in quantum mechanics, gauge theory, etc.

In this article, we describe two ways in which complex determinants and blades are related to volumes.
In the first one, the absolute value of a complex determinant, or the norm of a complex blade, is the square root of a certain volume. 
It follows by interpreting known formulas linking real and complex determinants (via field norms \cite{Bourbaki1989}), often used to explain the natural orientation of complex spaces \cite{Greub1978,Griffiths1994}.
	\CITE{Greub1978 p.188, Griffiths1994 p.18}
This volumetric interpretation has drawn little attention:
after a literature search and inquiries to experts, we only found it in internet forums \cite{Lahtonen2012,Malyshev2016}.
But it has proven surprisingly useful, leading to complex volumetric Py\-thag\-o\-re\-an theorems \cite{Mandolesi_Pythagorean} with fundamental implications for quantum theory \cite{Mandolesi_Born},
and providing geometric interpretations for an asymmetric angle between complex subspaces \cite{Mandolesi_Grassmann,Mandolesi_Products}, for the contraction (interior product) of complex blades \cite{Mandolesi_Contractions},
and for Fubini-Study and Cauchy-Binet metrics in complex Grassmannians \cite{Mandolesi_TotalGrassmannian}.

The second way is new, and uses fractions of volumes determined by a holomorphy index. 
This index measures how a real subspace of $\C^n$ fails to be holomorphic (i.e., the underlying real space of a complex subspace) in terms of how it is displaced by the action of the complex structure.
The displacement is described using a generalization, via principal angles \cite{Bjorck1973,Galantai2006}, of the Kähler or holomorphy angles of real planes \cite{Goldman1999,Rosenfeld1997,Scharnhorst2001} to high dimensional subspaces,  
similar to the multiple Kähler angle of \cite{Tasaki2001}.
These angles combine into a disjointness angle derived from the asymmetric one, whose relation to exterior products \cite{Mandolesi_Grassmann,Mandolesi_Products}
implies that the holomorphy index equals the ratio of the norms of complex and real exterior products of the same vectors,
thus establishing a link between real and complex blades.

These volumetric interpretations are complemented by a new concept of complex orientation, which is natural but unorthodox, since complex spaces are usually seen as naturally oriented \cite{Greub1978,Griffiths1994}.
This natural orientation, however, is actually a real orientation of the underlying real space, and does not reflect complex geometry as well as truly complex orientations.
On the other hand, while real orientations are discrete (two), complex ones form a continuum (a circle), as in each case they can be identified with the unit blades in $\bigwedge^n \R^n \cong \R$ or $\bigwedge^n \C^n \cong \C$.
Although this means they will lack some common uses of the real ones, complex orientations have been related to the contraction of complex multivectors, and used to obtain a simpler Hodge-like star operator for complex spaces \cite{Mandolesi_Contractions}.
Here we analyze them in more detail, giving alternative definitions, relating them to elementary complex transformations, and to the argument of complex determinants.

Our results suggest representing complex blades as fractions of parallelotopes,
with the advantage of indicating the holomorphy of the real subspace spanned by the vectors in a blade decomposition.
However, unless the decomposition is chosen adequately, graphical sums of blades become more complicated than usual, as the holomorphy must be taken into account.
We also propose representations for complex orientations and vectors.

Complex Grassmann algebras are well suited for use with Hermitian geometry,
and so are contractions%
\footnote{Asymmetric versions of Hestenes inner product, with better properties.}
and regressive products \cite{Mandolesi_Contractions,Mandolesi_Contractions2}.
For Clifford algebras this is less clear.
They have been used in complex settings in the form of Hermitian Clifford Analysis \cite{Brackx2008,Sabadini2014},
Unitary Geometric Algebra \cite{Sobczyk1993,Sobczyk2012unitary},
and other ways \cite{Hrdina2022,Marchuk2008,Sobczyk2001universal,Stoica2018}.
	\CITE{Sobczyk2013 does the same; Sobczyk1993 uses a bivector as $\im$; Sobczyk2001universal uses complex algebra}
But, as we discuss, these methods introduce concepts like complex conjugation, or distinctly real and imaginary vectors, which are extraneous to general Hermitian spaces.
This breaks the isotropy of the spaces, an important feature for some applications, e.g. in quantum mechanics and gauge theory.
Worse yet, the complex Clifford product seems incompatible with the Hermitian one and isotropy.
Geometric algebraists might suggest using real Clifford algebras in the underlying real space, but as we show this can be inefficient and cumbersome.
In light of this, we also discuss Hestenes' rejection of complex scalars in his formulation of Clifford Geometric Algebra \cite{Hestenes1986unified,Hestenes1984}.

\Cref{sc:Determinants} presents the first volumetric interpretation for determinants.
The cases are linked by simple algebraic formulas which give the result when properly interpreted, 
but we also provide more geometric proofs, to give different views of the relation between determinants and volumes.
We also show why some real proofs fail in the complex case.
\Cref{sc:Complex orientation} defines and interprets complex orientations, relates them to elementary complex transformations, and interprets the argument of determinants. 
\Cref{sc:Complex blades} transfers the first volumetric interpretation of determinants to blade norms, identifies orientations as unit blades, and obtains a first relation between real and complex blades.
It also shows how to obtain simpler proofs for real and complex volumetric Pythagorean theorems \cite{Mandolesi_Pythagorean}.
\Cref{sc:Holomorphy} reviews the concept of holomorphy for real subspaces of $\C^n$, and defines generalized Kähler angles and the holomorphy index, which linked to the disjointness angle gives a simpler relation between real and complex blades.
\Cref{sc:2nd interpretation} gives the second volumetric interpretation of complex blades and determinants, 
and links the determinant of linear transformations of $\C^n$ to  holomorphy changes and dilation of volumes in $n$-dimensional real subspaces.
\Cref{sc:Visual representations} proposes graphical representations of complex vectors, orientations and multivectors.
\Cref{sc:Complex numbers and Geometric Algebra} discusses the difficulties in applying Clifford algebras to Hermitian geometry. 
\Cref{sc:Conclusion} summarizes our results and discusses possible developments.
%
\Cref{Angles between subspaces} reviews different concepts of angle between subspaces, defines and interprets a disjointness angle, and establishes some properties, in particular its relation to exterior products.

Throughout the article, the well known real case is presented alongside the complex one, for comparison.
Also, several examples are continuously developed, incorporating the results of each new section.

\section{Determinants -- 1st interpretation}\label{sc:Determinants}

Let $\F= \R$ or $\C$, and $\inner{\cdot,\cdot}$ be the canonical inner/Hermitian%
	\footnote{Conjugate-linear in the first entry.}
product in $\F^n$.
When necessary, we write $\Span_\R$, $\dim_\C$, $\R$-linear, etc.\ to indicate the field.
A $p$-dimensional subspace is a \emph{$p$-subspace} (a \emph{line} if $p=1$, a \emph{plane} if $p=2$).
The line spanned by $v\neq 0$ is denoted by $\F v$.

Each complex subspace $V \subset \C^n$ determines an \emph{underlying real subspace} $V_\R \subset \R^{2n}$ via an identification of $v=(x_1+\im y_1,\ldots,x_n+\im y_n) \in \C^n$ with $v=(x_1, y_1,\ldots,x_n, y_n) \in \R^{2n}$.
In $(\C^n)_\R$, the inner product is $\inner{\cdot,\cdot}_\R = \Real \inner{\cdot,\cdot}$,
and $\im$ becomes the \emph{complex structure}
\cite{Goldman1999,Huybrechts2004},
	\CITE{Goldman1999 p.29, Huybrechts2004 p.25}
an operator of rotation by $\frac\pi2$ given by $\im v=(-y_1,x_1,\ldots,-y_n,x_n)$.
For $u,v \in \C^n$ we have $\inner{\im u,\im v}_\R = \inner{u,v}_\R$ and
		\OMIT{$\inner{u,\im v}_\R = \Real(\im \inner{u,v}) = -\Imag \inner{u,v}$}
\begin{equation}\label{eq:inner}
	\inner{u,v} = \inner{u,v}_\R + \im \inner{\im u, v}_\R.
\end{equation}
If $\inner{u,v}_\R = 0$ then $u$ and $v$ are \emph{$\R$-orthogonal} ($u\!\perp_\R\!v$).
They are \emph{$\C$-orthogonal} if $\inner{u,v} = 0$, 
which by \eqref{eq:inner} means $v$ is $\R$-orthogonal to $\C u \cong \Span_\R\{u,\im u\}$.

In the real case, let $V_\R=V$ and $\inner{\cdot,\cdot}_\R = \inner{\cdot,\cdot}$.

The \emph{Euclidean angle}
$\theta_{u,v} = \cos^{-1} \frac{\inner{u,v}_\R}{\|u\|\|v\|}$
of $u,v \in \F^n$ is the angle in $(\F^n)_\R$.
The \emph{Hermitian angle} \cite{Scharnhorst2001}
$\gamma_{u,v} = \cos^{-1} \frac{|\inner{u,v}|}{\|u\|\|v\|}$
is the angle between $\F u$ and  $\F v$.

\begin{definition}\label{df:main}
	Let $v_1,\ldots,v_p \in \F^n$.
	\begin{enumerate}[label = (\roman*)]
		\item $M(v_1,\ldots,v_p)$ is the $n \times p$ matrix with these vectors as columns.
		In the complex case, we also define $M_\R(v_1,\ldots,v_p)$ as the $2n \times p$ real matrix whose columns are these vectors considered in $(\C^n)_\R$.

		\item $G(v_1,\ldots,v_p) = \left(\inner{v_i,v_j}\right)_{p \times p}$ is their \emph{Gram matrix}.
		In the complex case,
		$G_\R(v_1,\ldots,v_p) = \left(\inner{v_i,v_j}_\R\right)_{p \times p}$ is their \emph{Gram matrix in $(\C^n)_\R$}.
		
		\item $\PP(v_1,\ldots,v_p) = \{\sum_{j=1}^p t_j v_j: 0\leq t_i \leq 1\}$ is the \emph{parallelotope} they span.
		
		\item $\VV(v_1,\ldots,v_p) = \VV_p(v_1,\ldots,v_p)$ is the \emph{$p$-volume} of $\PP(v_1,\ldots,v_p)$, defined by $\VV(v_1) = \|v_1\|$ and $\VV(v_1,\ldots,v_k) = \VV(v_1,\ldots,v_{k-1}) \cdot \|u_k\|$ for $k \leq p$, where $u_k = v_k - \sum_{j=1}^{k-1} c_j v_j$ for $c_j \in \R$ such that $\inner{v_j,u_k}_\R = 0$ $\forall\, j<k$. \label{it:volume}
	\end{enumerate}
\end{definition}

Note that $\VV_p$ is the usual $p$-dimensional volume in $(\F^n)_\R$, since each $u_k$ is the component of $v_k$  $\R$-orthogonal to $\Span_\R\{v_1,\ldots,v_{k-1}\}$ and so $\|u_k\|$ is the height of $\PP(v_1,\ldots,v_k)$ \wrt its base $\PP(v_1,\ldots,v_{k-1})$.

The following theorems, and their equivalence, are proven in \Cref{sc:Proofs}.

\begin{theorem}\label{pr:det vol}
	Let $M=M(v_1,\ldots,v_n)$ for $v_1,\ldots,v_n \in \F^n$. 
	\begin{enumerate}[label = (\roman*)]
		\item $|\det M| = \VV_n(v_1,\ldots,v_n)$, in the real case. \label{it:real det}
		
		\item $|\det M|^2 = \VV_{2n}(v_1,\im v_1,\ldots,v_n,\im v_n)$, in the complex case. \label{it:complex det}
	\end{enumerate}
\end{theorem}

\begin{theorem}\label{pr:Gramian}
	Let $G = G(v_1,\ldots,v_p)$ for $v_1,\ldots,v_p \in \F^n$. 
	\begin{enumerate}[label = (\roman*)]
		\item $\det G = \VV_p(v_1,\ldots,v_p)^2$, in the real case. \label{it:real Gramian}
			\CITE{Shafarevich2013, p.219-221}
		
		\item $\det G = \VV_{2p}(v_1,\im v_1,\ldots,v_p,\im v_p)$, in the complex case. \label{it:complex Gramian}
	\end{enumerate}
\end{theorem}

\begin{remark}\label{rm:iP P non orthog}
	As $\im$ produces a $\frac\pi2$ rotation in $(\C^n)_\R$,
	one might erroneously think that 
	$\VV_{2p}(v_1,\im v_1,\ldots,v_p,\im v_p)$ equals $\VV_p(v_1,\ldots,v_p)^2$,
	the product of the equal volumes of $\PP = \PP(v_1,\ldots,v_p)$ and the rotated $\im \PP = \PP(\im v_1,\ldots, \im v_p)$.
	This fails as the direction of rotation might not be orthogonal to $\PP$ (see \Cref{ex:figura}). 
	\OMIT{and \Cref{pr:complex real concepts}}
\end{remark}

\begin{theorem}\label{pr:det scale}
	Let $T$ be an $\F$-linear transformation of $\F^n$.
	\begin{enumerate}[label = (\roman*)]
		\item In the real case, $T$ scales $n$-volumes in $\R^n$ by a factor of $|\det T|$. \label{it:real det scale}
		
		\item In the complex case, $T$ scales $2n$-volumes in $(\C^n)_\R$ by a factor of $|\det T|^2$.\label{it:complex det scale}
	\end{enumerate}
\end{theorem}

Relations between real determinants and linear independence or invertibility have well known interpretations in terms of degenerate parallelotopes,
which now extend to complex ones:
e.g., 
we have $\det G(v_1,\ldots,v_p) =0 \Leftrightarrow
\VV_{2p}(v_1,\im v_1,\ldots,v_p,\im v_p) = 0
\Leftrightarrow
\PP(v_1,\im v_1,\ldots,v_p,\im v_p)$ is degenerate
$\Leftrightarrow 
v_1,\im v_1,\ldots,v_p,\im v_p$ 
are $\R$-linearly dependent
$\Leftrightarrow v_1,\ldots,v_p$ are $\C$-linearly dependent.

\subsection{Proofs}\label{sc:Proofs}

We give various proofs, to provide different insights into how determinants relate to volumes,
and to show why some proofs of the real case fail in the complex one.
Equivalence of \Cref{pr:Gramian,pr:det vol,pr:det scale} is proven as usual:

\begin{proof}[Proof of Equivalence]
	(\ref{pr:Gramian} $\Rightarrow$ \ref{pr:det vol})
	If $p=n$ and $M^\dagger$ is the (conjugate if $\F=\C$) transpose of $M$ then
	$G = M^\dagger M$,
	so $\det G = |\det M|^2$.
	
	(\ref{pr:det vol} $\Rightarrow$ \ref{pr:Gramian})
	As $G$ is invariant by orthogonal (unitary if $\F=\C$) transformations, we can assume $v_1,\ldots,v_p \in \F^p$, so that $M = M(v_1,\ldots,v_p)$ is a $p \times p$ matrix and again $\det G = |\det M|^2$.
	
	(\ref{pr:det vol} $\Leftrightarrow$ \ref{pr:det scale})
	Let $T$ be given by
	$M(v_1,\ldots,v_n)$ in the canonical basis  $(e_1,\ldots,e_n)$ of $\F^n$.
	In the real case, it maps the unit box $\PP(e_1,\ldots,e_n)$ to $\PP(v_1,\ldots,v_n)$.
	In the complex one, it maps the unit box $\PP(e_1,\im e_1,\ldots,e_n,\im e_n)$ to $\PP(v_1,\im v_1,\ldots,v_n,\im v_n)$.
	As linear transformations scale equally all top dimensional volumes, the result follows.
\end{proof}

The following algebraic lemmas link real and complex determinants.
Similar to results in \cite[p.\,188]{Greub1978} and \cite[p.\,18]{Griffiths1994},%
\CITE{Griffiths1994 tem \ref{pr:lema} para Jacobianos.
	Greub1978 tem \ref{pr:det MR} com ordem diferente pois identifica $\C^n$ com $\R^{2n}$ pondo partes reais depois imaginárias}
they also follow (more abstractly) via field norms \cite[p.\,546]{Bourbaki1989}.
	\CITE{eq. (24)}
They allow easy proofs of the complex case of the theorems, as in \cite{Lahtonen2012,Malyshev2016}, but provide little geometric insight:
e.g., \Cref{pr:det vol}\ref{it:complex det} follows from the real case and \Cref{pr:det MR}.

\begin{lemma}\label{pr:lema}
	$|\det (A+\im B)|^2 = \det \begin{psmallmatrix}
		A & -B \\
		B & A
	\end{psmallmatrix}$
	for real $p \times p$ matrices $A$ and $B$.
\end{lemma}
\begin{proof}
	If
	$M = A+\im B$,
	$N = \begin{psmallmatrix}
			A & -B \\
			B & A
		\end{psmallmatrix}$
	and 
	$T = \frac{1}{\sqrt{2}} 
	\begin{psmallmatrix}
		\Id & \im \Id \\
		\im \Id & \Id
	\end{psmallmatrix}$,
	where $\Id$ is the $p \times p$ identity,
	then
	\OMIT{$T^{-1} = \frac{1}{\sqrt{2}} \cdot
		\begin{psmallmatrix}
			\Id & -\im \Id \\
			-\im \Id & \Id
		\end{psmallmatrix}$}
	$T N T^{-1} = \begin{psmallmatrix}
		M & 0 \\
		0 & \overline{M}
	\end{psmallmatrix}$
	and
	$\det N = \det(T N T^{-1}) = |\det M|^2$.
\end{proof}

\begin{lemma}\label{pr:det MR}
	$|\det M(v_1,\ldots,v_n)|^2 = \det M_\R(v_1,\im v_1,\ldots,v_n,\im v_n)$ for $v_1,\ldots,v_n \in \C^n$.
\end{lemma}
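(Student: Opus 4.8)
The plan is to reduce the statement to the preceding \Cref{pr:lema} by recognizing the real matrix $M_\R(v_1,\im v_1,\ldots,v_n,\im v_n)$ as nothing more than a row-and-column reordering of the block matrix $N$ appearing there. First I would write each column as $v_j = a_j + \im b_j$ with $a_j, b_j \in \R^n$, so that the complex matrix $M = M(v_1,\ldots,v_n)$ takes the form $M = A + \im B$, where $A$ and $B$ are the real $n\times n$ matrices whose columns are the $a_j$ and $b_j$. This is exactly the hypothesis of \Cref{pr:lema}, so it remains only to match $M_\R$ with the corresponding $N = \begin{psmallmatrix} A & -B \\ B & A \end{psmallmatrix}$.

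Next I would read off the entries of the $2n\times 2n$ matrix $M_\R$ using the coordinate convention from the Preliminaries, where a vector $v=(x_1+\im y_1,\ldots)$ becomes $(x_1,y_1,\ldots,x_n,y_n)$ and $\im v$ becomes $(-y_1,x_1,\ldots,-y_n,x_n)$. Indexing rows by $(x_1,y_1,\ldots,x_n,y_n)$ and columns by $(v_1,\im v_1,\ldots,v_n,\im v_n)$, the $k$-th coordinate pair of the $j$-th column pair is the $2\times 2$ block $\begin{psmallmatrix} a_{kj} & -b_{kj} \\ b_{kj} & a_{kj} \end{psmallmatrix}$, i.e.\ the usual matrix of multiplication by the complex number $a_{kj}+\im b_{kj}$. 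Regrouping the rows into the order $(x_1,\ldots,x_n,y_1,\ldots,y_n)$ and the columns into $(v_1,\ldots,v_n,\im v_1,\ldots,\im v_n)$ then assembles these blocks into precisely $N$, since the $(k,j)$ blocks fill the four quadrants $A$, $-B$, $B$, $A$ as required.

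The determinant computation follows: passing from $M_\R$ to $N$ applies one and the same interleaving permutation $\pi$ to the rows and to the columns, so $\det M_\R$ is multiplied by $\operatorname{sgn}(\pi)^2 = 1$, giving $\det M_\R = \det N$. Combining this with \Cref{pr:lema}, which yields $\det N = |\det M|^2$, finishes the argument. The one point to verify carefully — and the only place a sign could go wrong — is that the reorderings of rows and of columns are governed by the very same permutation, so that their signs cancel and we obtain exact equality rather than equality up to sign; this is what justifies treating the reordering as a conjugation that leaves the determinant invariant.
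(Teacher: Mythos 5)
Your proposal is correct and follows essentially the same route as the paper: write $M=A+\im B$, identify $M_\R$ as the block matrix $\begin{psmallmatrix} A & -B \\ B & A\end{psmallmatrix}$ after applying the same interleaving permutation to rows and columns (so the signs cancel), and invoke \Cref{pr:lema}. Your explicit remark that the row and column reorderings are governed by one and the same permutation is just a more careful spelling-out of the paper's phrase ``an equal number of row and column switches.''
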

\begin{proof}
	If $v_j = (a_{1j} + \im b_{1j},\ldots,a_{nj} + \im b_{nj})$, with $a_{ij},b_{ij}\in \R$ for $i,j \in \{1,\ldots,n\}$, then
	$M = A+\im B$ for $A = (a_{ij})_{n\times n}$ and $B=(b_{ij})_{n\times n}$, and
	\begin{equation*}
		M_\R = 
		\begin{pmatrix}
			a_{11} & -b_{11} & \cdots & a_{1n} & -b_{1n} \\
			b_{11} & a_{11} & \cdots & b_{1n} & a_{1n} \\
			\vdots & \vdots & \ddots & \vdots & \vdots \\
			a_{n1} & -b_{n1} & \cdots & a_{nn} & -b_{nn} \\
			b_{n1} & a_{n1} & \cdots & b_{nn} & a_{nn} 
		\end{pmatrix}.
	\end{equation*}
	An equal number of row and column switches gives
	$\det M_\R = 
	\det \begin{psmallmatrix}
		A & -B \\
		B & A
	\end{psmallmatrix}$,
	so the result follows from \Cref{pr:lema}.
\end{proof}

For the real case of \Cref{pr:det vol}, a common proof \cite{Lang1997} uses the fact that $\VV$ has the following properties, which also characterize $|\det M|$ uniquely:
	\OMIT{the absolute value is due to \ref{it:permutation} and \ref{it:dilation}}
\begin{enumerate}[label = {(\alph*)}]
	\item $\VV(e_1,\ldots,e_n) = 1$ for the canonical basis $(e_1,\ldots,e_n)$. \label{it:unit box}
	
	\item $\VV(v_1,\ldots,v_n) =0 \Leftrightarrow$ the vectors are linearly dependent. \label{it:LD}
	
	\item $\VV(v_1,\ldots,v_n)$ does not depend on the order of the vectors. \label{it:permutation}
	
	\item $\VV(v_1+u,v_2,\ldots,v_n) = \VV(v_1,\ldots,v_n)$ for $u \in \Span\{v_2,\ldots,v_n\}$. \label{it:shear}
	
	\item $\VV(c v_1,v_2,\ldots,v_n) = |c|\cdot \VV(v_1,\ldots,v_n)$ for $c \in \R$. \label{it:dilation}
\end{enumerate}

In the complex case, \ref{it:dilation} fails as multiplication by $c\in \C$ can rotate $v_1$, changing its height \wrt the others (see \Cref{ex:figura}).
But we have:
\begin{enumerate}[label = {(\alph*')}]\setcounter{enumi}{4}
	\item $\VV(c v_1, c \im v_1, v_2,\im v_2,\ldots,v_n,\im v_n) = |c|^2\cdot \VV(v_1,\im v_1,\ldots,v_n,\im v_n)$, for $c \in \C$. \label{it:complex dilation}
\end{enumerate} 
\begin{proof}[Proof of \ref{it:complex dilation}]
	As the orthogonal projection $P_W$ on $W = \Span_\C\{v_2,\ldots,v_n\} = \Span_\R\{v_2,\im v_2, \ldots, v_n,\im v_n\}$ is $\C$-linear,
	$v_1$ and $\im v_1$ have the same height $h=\|v_1-P_W v_1\| = \|\im v_1 - P_W(\im v_1)\|$ \wrt $W$, while $c v_1$ and $c\im v_1$ have $|c|h$.
	And as $v_1 \perp_\R \im v_1$, the height of $v_1$ \wrt $W \oplus \R(\im v_1)$
	is also $h$,
	so that $\VV(v_1,\im v_1,\ldots,v_n,\im v_n) = h \cdot \VV(\im v_1,\ldots,v_n,\im v_n) = h^2 \cdot \VV(v_2,\im v_2,\ldots,v_n,\im v_n)$.
	Likewise, $\VV(c v_1,c \im v_1, v_2,\im v_2,\ldots,v_n,\im v_n) = |c|^2 \cdot h^2 \cdot \VV(v_2,\im v_2,\ldots,v_n,\im v_n)$.
\end{proof}

One can prove \Cref{pr:det vol}\ref{it:complex det} by checking that
$\VV(v_1,\im v_1,\ldots,v_n,\im v_n)$, like $|\det M|^2$, satisfies
\ref{it:unit box}--\ref{it:shear}, with adequate adjustments%
	\footnote{Such as: (a') $\VV(e_1,\im e_1,\ldots,e_n,\im e_n) = 1$ for the canonical basis $(e_1,\ldots,e_n)$ of $\C^n$.},
and \ref{it:complex dilation}.
A more direct proof uses shears to turn the parallelotope into an orthogonal box:

\begin{proof}[Proof of \Cref{pr:det vol}\ref{it:complex det}]
	$\VV(v_1,\im v_1,\ldots,v_n,\im v_n) = h^2 \cdot \VV(v_2,\im v_2,\ldots,v_n,\im v_n)$, as above, and $h=\|u_1\|$ for $u_1=v_1-\sum_{j=2}^n c_{1j} v_j$ with $c_{1j}\in \C$ such that $\inner{u_1,v_j} = 0$ for $j > 1$.
	Using induction, we find 
	$\VV(v_1,\im v_1,\ldots,v_n,\im v_n) = \prod_{k=1}^n \|u_k\|^2$, where $u_k = v_k-\sum_{j=k+1}^n c_{kj} v_j$ with $c_{kj}\in \C$ such that $\inner{u_k,v_j} = 0$ for $j > k$.
	And we also have 
	$|\det M(v_1,\ldots,v_n)|^2 = |\det M(u_1,\ldots,u_n)|^2 = 
	\det G(u_1,\ldots,u_n) = \prod_{k=1}^n \|u_k\|^2$
	as $(u_1,\ldots,u_n)$ is $\C$-orthogonal.
\end{proof}

The next proof is a little more geometric, using shears given by elementary column operations instead of the algebraic relation $|\det M|^2 = \det G$.

\begin{proof}[Proof of \Cref{pr:det vol}\ref{it:complex det}]	
	We can assume the vectors are $\C$-linearly independent, as
	$\det M = 0 \Leftrightarrow v_1,\ldots,v_n$ are $\C$-linearly dependent $\Leftrightarrow v_1,\im v_1,\ldots,v_n,\im v_n$ are $\R$-linearly dependent $\Leftrightarrow 
	\VV_{2n}(v_1,\im v_1,\ldots,v_n,\im v_n) = 0$.
	
	Reordering the vectors and repeatedly adding to one a $\C$-multiple of another
		\OMIT{as in a Gaussian elimination}
	we can turn $M$ into  
	$\tilde{M} = \diag(\lambda_1,\ldots,\lambda_n)$ with 
	$\lambda_i \in \C$, and 
	$|\det M| = |\det \tilde{M}|$. 
	This preserves $\VV_{2n}(v_1,\im v_1,\ldots,v_n,\im v_n)$, as, for example, $v_1 \mapsto v_1 + (a+\im b) v_2$, with $a,b\in \R$, corresponds to shears $v_1 \mapsto v_1 + a v_2 + b (\im v_2)$ and $\im v_1 \mapsto \im v_1 - b v_2 + a (\im v_2)$.
	If $\tilde{v}_1,\ldots,\tilde{v}_n$ are the columns of $\tilde{M}$ then $\tilde{v}_1,\im \tilde{v}_1, \ldots,\tilde{v}_n, \im \tilde{v}_n$ are $\R$-orthogonal, forming a box of volume $\VV_{2n} = \prod_{j=1}^n \|\tilde{v}_j\| \|\im \tilde{v}_j\| = \prod_{j=1}^n |\lambda_j|^2 = |\det \tilde{M}|^2$.
\end{proof}

Now we prove \Cref{pr:Gramian}, including a proof of the real case which has been erroneously used in the complex one, as we discuss below.

\begin{proof}[Proof of \Cref{pr:Gramian}]
	\ref{it:real Gramian} 
	Follows via induction:
	$\det G(v_1) = \|v_1\|^2 = \VV(v_1)^2$,
	and, with the notation of \Cref{df:main}\ref{it:volume}, subtracting $c_j$ times each column $j < p$ from the last column of $G(v_1,\ldots,v_p)$, 
	and using $\inner{v_j,u_p}=0$, we find
	\begin{align*}
		\det G(v_1,\ldots,v_p) &= 
		\begin{vmatrix}
			\inner{v_1,v_1} & \cdots & \inner{v_1,v_{p-1}} & 0 \\
			\vdots & \ddots & \vdots & \vdots \\
			\inner{v_{p-1},v_1} & \cdots & \inner{v_{p-1},v_{p-1}} & 0 \\
			\inner{v_p,v_1} & \cdots & \inner{v_p,v_{p-1}} & \inner{v_p,u_p}
		\end{vmatrix} \\
		&= \det G(v_1,\ldots,v_{p-1}) \cdot \inner{v_{p},u_p} \\
		&= \VV(v_1,\ldots,v_{p-1})^2 \cdot \|u_p\|^2 = \VV(v_1,\ldots,v_p)^2.
	\end{align*}
	
	\ref{it:complex Gramian}
	Let $A = (a_{ij})_{p\times p}$ and $B=(b_{ij})_{p\times p}$
	with
	\SELF{$\inner{v_i,v_j} = a_{ij} + \im b_{ij}$}
	$a_{ij} = \inner{v_i,v_j}_\R = \inner{\im v_i,\im v_j}_\R$ 
	and 
	$b_{ij} = \inner{\im v_i, v_j}_\R = -\inner{v_i, \im v_j}_\R$.
	Then
	$G = G(v_1,\ldots,v_p) = A + \im B$
	and
	$G_\R = G_\R(v_1,\ldots,v_p, \im v_1,\ldots,\im v_p) = \begin{psmallmatrix}
		A & -B \\
		B & A
	\end{psmallmatrix}$.
	By \Cref{pr:lema} and \ref{it:real Gramian},
	$|\det G|^2 = \det G_\R = \VV(v_1,\ldots,v_p, \im v_1,\ldots,\im v_p)^2$, and the result follows as $\det G \geq 0$.	
\end{proof}

Part (i) fails in the complex case because in \Cref{df:main}\ref{it:volume} we have $\inner{v_j,u_k}_\R = 0$.
Some authors \cite{Barth1999,Gantmacher1959,Halperin1962}
	\CITE{Gantmacher1959 ou 2000 pp.246-251}
define $\VV$ using $\inner{v_j,u_k} = 0$ or in other ways that give $\det G = \VV^2$ even in the complex case.
This is geometrically incorrect: e.g., $\VV_2(u,v)$ would be $\|u\|$ times the height of $v$ \wrt $\C u$, not $\R u$, so the area in \Cref{fig:exemplo1} would be $6$, 
and the square $S$ in \Cref{ex:square} would have area $0$.

Our last proof, of \Cref{pr:det scale}, gives a clear geometric picture, even if it uses facts about matrices which are not so geometrically intuitive.

\begin{proof}[Proof of \Cref{pr:det scale}]
	We start with the complex case, which is simpler.
	
	\ref{it:complex det scale}
	The matrix of $T$ in the canonical basis $(e_1,\ldots,e_n)$ of $\C^n$ decomposes as $T = R^{-1} J R$, with $R$ invertible and $J$ in Jordan normal form.
	As $R^{-1}$ and $R$ scale $2n$-volumes in $(\C^n)_\R$ by inverse factors, $T$ and $J$ scale them equally.
	Each $\lambda_j \in \C$ in the diagonal of $J$ scales $e_j$ and $\im e_j$ by $|\lambda_j|$,
	and 
		\OMIT{if $\lambda_j \neq 0$}
	rotates
		\OMIT{As the basis is orthonormal, which would not be the case if we started with the basis in which $T$ is given by $J$.}	
	$\Span_\R\{e_j,\im e_j\}$ without further affecting $2n$-volumes.
	These are also not affected by shears given by
	superdiagonal elements, and so scale by $\prod_{j=1}^n |\lambda_j|^2 = |\det T|^2$.

	\ref{it:real det scale}
	Similar, but the diagonal of a real Jordan form can have eigenvalues $\lambda_j \in \R$, which scale $e_j$ by $|\lambda_j|$ and maybe reflect it,
	or, for each pair of complex conjugate $\lambda_j$ and $\lambda_{j+1}$, a $2\times 2$ block which rotates $\Span\{e_j,e_{j+1}\}$ and scales its vectors by $|\lambda_j| = |\lambda_{j+1}|$.
	Thus $n$-volumes scale by $\prod_{j=1}^n |\lambda_j| = |\det T|$.
\end{proof}

\subsection{Examples}

The following examples will be further developed later on.
Note how calculations in $\C^n$ are simpler than in the underlying $\R^{2n}$.

\begin{figure}
	\centering
	\begin{subfigure}[b]{0.37\textwidth}
		\includegraphics[width=\textwidth]{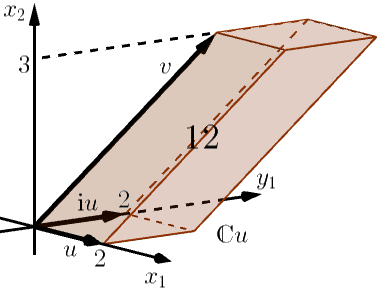}
		\caption{$\VV_3(u,\im u,v)=12$}
		\label{fig:exemplo3}
	\end{subfigure}
	\begin{subfigure}[b]{0.29\textwidth}
		\includegraphics[width=\textwidth]{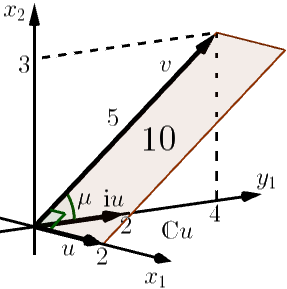}
		\caption{$\VV_2(u,v)=10$}
		\label{fig:exemplo1}
	\end{subfigure}
	\begin{subfigure}[b]{0.31\textwidth}
		\includegraphics[width=\textwidth]{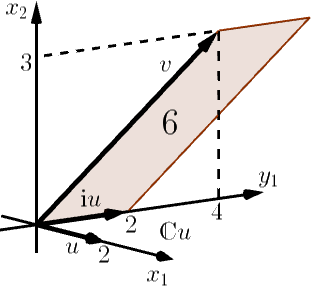}
		\caption{$\VV_2(\im u,v)=6$}
		\label{fig:exemplo2}
	\end{subfigure}
	\caption{Volumes of parallelotopes spanned by vectors of \Cref{ex:figura}.}
	\label{fig:exemplos}
\end{figure}

\begin{example}\label{ex:figura}%
		\OMIT{$u,v \in\C^2$, Figura, pure real, \Cref{ex:figura,ex:orientations,ex:figura3,ex:index,ex:figura6}}%
	In $\C^2 = \{(x_1+\im y_1,x_2+\im y_2)\}$, 
	let $u=(2,0)$ and $v=(4\im,3)$. 
	In the underlying $\R^4=\{(x_1,y_1,x_2,y_2)\}$, 
	$u=(2,0,0,0)$, $\im u = (0,2,0,0)$, $v=(0,4,3,0)$ and $\im v = (-4,0,0,3)$.
	\Cref{fig:exemplos} shows $u$, $\im u$, $v$ in $\R^3$, ignoring $y_2=0$.
	As $\im v$ has height $3$ \wrt $\R^3$, 
	$\VV_4(u,\im u,v,\im v)= 3\cdot\VV_3(u,\im u,v) = 36$, by \Cref{fig:exemplo3}. 
	In agreement with \Cref{pr:det vol}, $\det M(u,v)  = \begin{vsmallmatrix}
		2 & 4\im \\
		0 & 3
	\end{vsmallmatrix} = 6$, and 
	\begin{equation*}
		\det M_\R(u,\im u,v,\im v) = 
		\begin{vmatrix}
			2 & 0 & 0 & -4 \\
			0 & 2 & 4 & 0 \\
			0 & 0 & 3 & 0 \\
			0 & 0 & 0 & 3
		\end{vmatrix}
		= 36.
	\end{equation*}
	By \Cref{pr:Gramian}, $\VV_2(u,v) = \sqrt{G_\R(u,v)} =
	\begin{vsmallmatrix}
		4 & 0 \\
		0 & 25
	\end{vsmallmatrix}^\frac12 = 10$,
	as in \Cref{fig:exemplo1}.
	\Cref{fig:exemplo2} shows $\VV_2(\im u,v) \neq |\im|\cdot \VV_2(u, v)$, so property \ref{it:dilation} fails.
	As in \Cref{rm:iP P non orthog}, $\VV_4(u,\im u,v,\im v) \neq \VV_2(u,v)^2$ because $\PP(u,v)$ and $\PP(\im u,\im v)$ form an angle $\mu \neq \frac\pi2$.
\end{example}

\begin{example}\label{ex:totally real0}%
		\OMIT{$u,v\in\C^2$, tot real, \Cref{ex:totally real0,ex:orientations,ex:totally real}}%
	In $\C^2$, $u=(5\im,0)$ and $v=(0,4\im)$ form a rectangle of area $\VV_2(u,v) = 20$, which in this case coincides with $|\det M(u,v)|$.
	In the un\-der\-ly\-ing $\R^4$, $u=(0,5,0,0)$, $\im u = (-5,0,0,0)$, $v= (0,0,0,4)$ and $\im v = (0,0,-4,0)$ are orthogonal,	so
	$\PP(u,\im u,v,\im v)$ is a box with $\VV_4(u,\im u,v,\im v) = 400 = \det M_\R(u,\im u,v,\im v)$.
\end{example}

\begin{example}\label{ex:square}%
		\OMIT{$u,v \in \C$, square $S$, holom, só $u$ tot real, \Cref{ex:square,ex:orientations,ex:square2} (\ref{ex:square4} removido)}%
	In $\C$, $u=2$ and $v=2\im$ are linearly dependent,
	$\det G(u,v) = \begin{vsmallmatrix}
		4 & 4\im \\
		-4\im & 4
	\end{vsmallmatrix} = 0$,
	and 
	$\PP(u, \im u, v, \im v) = \PP(2,2\im,2\im,-2)$ is degenerate, with $\VV_4=0$.
	In the underlying $\R^2$, $u=(2,0)$ and $v=(0,2)$ form a square $S$ of area $\VV_2(u,v) = \sqrt{\det G_\R (u,v)} = \begin{vsmallmatrix}
		4 & 0 \\
		0 & 4
	\end{vsmallmatrix}^\frac12 = 4$.
	Equivalently,
	$\VV_2(u,v) = \VV_2(u,\im u) = \det G(u)= 4$.
\end{example}

\begin{example}\label{ex:3 em C3 parte0}%
		\OMIT{$u,v,w\in \C^3$, \Cref{ex:3 em C3 parte0,ex:orientations,ex:3 em C3 parte1,ex:index}}
	Let $u,v,w \in \C^3$ be the columns of
	$M =
	\begin{psmallmatrix}
		1 & 1+\im & 1+2\im \\
		0 & 1 & -1 \\
		-\im & 0 & \im
	\end{psmallmatrix}$.
	As $\det M = 3\im -3$,
	$\VV_6(u,\im u,v,\im v,w,\im w) = |\det M|^2 =  18$, the same as the $6 \times 6$ determinant of $M_\R(u,\im u,v,\im v,w,\im w)$.
	And $\VV_3(u,v,w) = \sqrt{\det G_\R(u,v,w)} =
	\begin{vsmallmatrix}
		2 & 1 & 0 \\
		1 & 3 & 2 \\
		0 & 2 & 7
	\end{vsmallmatrix}^\frac12
	= \sqrt{27}$. 
\end{example}

\begin{example}\label{ex:C3}%
		\OMIT{$u,v\in\C^3$, G, \Cref{ex:orientations,ex:C3 nova}}%
	In $\C^3$, 
	if $u = (1+2\im, 0, 3\im)$ and $v=(2,\im,3+\im)$ then $\VV_2(u,v) = \sqrt{\det G_\R (u,v)} =
	\begin{vsmallmatrix}
		14 & 5 \\
		5 & 15 
	\end{vsmallmatrix}^\frac12
	= \sqrt{185}$.
	And
	$\VV_4(u,\im u,v,\im v) = \det G(u,v) = \begin{vsmallmatrix}
		14 & 5-13\im \\
		5+13\im & 15
	\end{vsmallmatrix} = 16 = \sqrt{\det G_\R(u,\im u,v,\im v)}$ for
	\begin{equation*}
		G_\R (u,\im u,v,\im v) = 
		\begin{pmatrix}
			14 & 0 & 5 & \ 13 \\
			0 & 14 & -13 & \ 5 \\
			5 & -13 & 15 & \ 0 \\
			13 & 5 & 0 & \ 15
		\end{pmatrix}.
	\end{equation*}
\end{example}

\begin{example}\label{ex:complex T}%
		\OMIT{$T$ in $\C^2$, sem 0s, \Cref{ex:complex T,ex:orientations,ex:complex T 2}, (\ref{ex:blade norms} removido)}
	The $\C$-linear transformation of $\C^2$ given by
	$T = \begin{psmallmatrix}
		\im - \sqrt{3} & 1+2\im \\
		1+\im \sqrt{3} & -\im
	\end{psmallmatrix}$
	has $\det T = 4 e^{-\im\frac\pi6}$.
	By \Cref{pr:det scale}, it expands 4-volumes by $|\det T|^2 = 16$.
	Indeed, the corresponding $\R$-linear transformation $T_\R$ of the underlying $\R^4$ has%
	\begin{equation*}
		\det T_\R = \begin{vmatrix}
			-\sqrt{3} & -1 & 1 & \ -2 \\
			1 & -\sqrt{3} & 2 & \ 1 \\
			1 & -\sqrt{3} & 0 & \ 1 \\
			\sqrt{3} & 1 & -1 & \ 0
		\end{vmatrix} = 16.
	\end{equation*}
\end{example}

\section{Orientations, arguments and  transformations}\label{sc:Complex orientation}

To interpret the argument of complex determinants we need complex orientations, which are unorthodox but useful \cite{Mandolesi_Contractions}.
For a subspace $V \subset \F^n$, let
$GL(V)$, $U(V)$ and $SU(V)$ be its general linear, unitary and special unitary groups%
\footnote{In the real case, $U(V)$ and $SU(V)$ are the orthogonal groups $O(V)$ and $SO(V)$.}, and
$GL^+(V) = \{T\in GL(V):\det T>0\}$.
Bases are ordered.

\begin{definition}\label{df:orientation}
	The set $\OO_V$ of \emph{orientations} of $V \subset \F^n$ can be defined in the following equivalent%
		\footnote{Equivalence in the complex case follows as in the usual real one.}
	ways:
	\begin{enumerate}[label = (\roman*)]
		\item $\OO_V = \{$bases $\beta$ of $V\}/\!\sim$, with $\beta_1 \sim \beta_2$ if $\beta_2 = T(\beta_1)$ for $T \in GL^+(V)$;\label{it:orientation bases}
		
		\item $\OO_V = \{$orthonormal bases $\beta$ of $V\}/\!\sim$, with $\beta_1 \sim \beta_2$ if $\beta_2 = T(\beta_1)$ for $T \in SU(V)$. \label{it:orientation orthon bases}
%
	\end{enumerate}
	The \emph{canonical orientation} of $\F^n$ is that of its canonical basis.
\end{definition}

This definition is common in the real case \cite{Shafarevich2013},
\CITE{Shaw p.348}
and extends naturally to the complex one, but the result has an important difference.
Fixed a basis $\beta_0$, any other is $\beta = T(\beta_0)$ for $T\in GL(V)$.
If $\det T = r e^{\im \varphi}$ ($r>0$) 
then $T = T' U$ with $U = \diag(1,\ldots,1,e^{\im \varphi})$ and $T' \in GL^+(V)$.
So \ref{it:orientation bases} and \ref{it:orientation orthon bases} give bijective correspondences
$\OO_V \simeq GL(V)/GL^+(V) \simeq U(V)/SU(V) \simeq \{\diag(1,\ldots,1,e^{\im \varphi})\} \simeq \{e^{\im \varphi} : \varphi = \arg(\det T) \text{ for } T \in GL(V)\}$.
In the real case,
$\varphi = 0$ or $\varphi = \pi$, so $\OO_V \simeq \{\pm 1\}$ (unit circle in $\R$) has only 2 real orientations, as usual.
The complex case admits any $\varphi\in [0,2\pi)$, so $\OO_V$ identifies with the unit circle in $\C$, and there is a continuum of complex orientations.
We call $\varphi$ the \emph{phase difference} between the orientations of $\beta$ and $\beta_0$.

Another way to look at this is to consider, for orthonormal  $v_1,\ldots, v_{p-1} \in V$, with $p= \dim V$, the choices for $v_p$ to complete an orthonormal basis. If $\F=\R$, given one such $v_p$ the only other choice is $-v_p$, and each gives a real orientation.
If $\F=\C$, any $e^{\im \varphi} v_p$ works, and each gives a complex orientation.%
\SELF{complex orientation of $(v_1,\ldots,v_n)$ is NOT linked to real ones of $(v_1,\im v_1,\ldots,v_n,\im v_n)$, which is $+$, or $(v_1,\ldots,v_n,\im v_1,\ldots,\im v_n)$, which has constant sign.}

As bases give isomorphisms $f:\F^p \rightarrow V$,
orientations can also be seen as equivalence classes of isomorphisms (\resp isometric isomorphisms) modulo $GL^+(V)$ (\resp $SU(V)$).
For a real line, each orientation is a class of identifications with $\R$, with the positive semi-axis in one of 2 directions (\Cref{fig:real_orientations}).
For a complex line, each orientation is a class of identifications with $\C$, with the positive real semi-axis towards a point in the unit circle, and the imaginary one rotated $90^\circ$ by $\im$ (\Cref{fig:complex_orientations}).

\begin{figure}
	\centering
	\begin{subfigure}[b]{0.46\textwidth}
		\centering
		\includegraphics[width=0.95\linewidth]{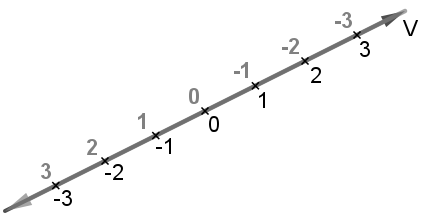}
		\caption{Opposite real orientations of a real line $V \subset \R^n$, differing by a reflection.}
		\label{fig:real_orientations}
	\end{subfigure} 
	\hspace{5mm}
	\begin{subfigure}[b]{0.46\textwidth}
		\centering
		\includegraphics[width=0.95\linewidth]{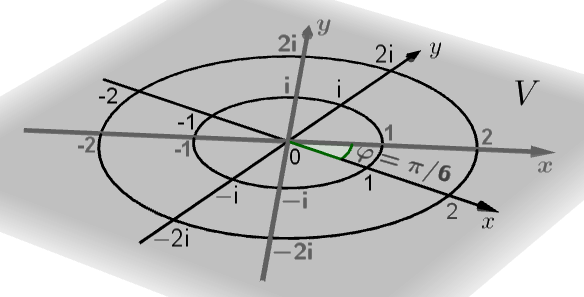}
		\caption{Complex orientations of a complex line $V \subset \C^n$, differing by a phase $\varphi=\frac\pi6$.}
		\label{fig:complex_orientations}
	\end{subfigure}
	\caption{Orientations of real and complex lines, seen as identifications with $\R$ or $\C$.}
	\label{fig:line}
\end{figure}

By \Cref{pr:det MR},
\OMIT{$\det M_\R \geq 0$. \\ Let $p=n$, w.l.o.g.}
all bases of $V_\R$ of the form $(v_1,\im v_1,\ldots,v_p,\im v_p)$, for a basis $(v_1,\ldots,v_p)$ of $V$, have the same real orientation,
considered a \emph{natural orientation} of complex spaces \cite{Greub1978,Griffiths1994}.
\CITE{Greub1978 p.188, Griffiths1994 p.18}
But, being a real orientation in $V_\R$, it does not reflect the complex geometry of $V$ as well as complex orientations (which is why these give a simpler star operator than the Hodge one \cite{Mandolesi_Contractions}).
A possible reason why complex orientations have been neglected is that they form a continuum, being inadequate for applications that need discreteness (e.g., in topology).
And they lose a nice feature: real orthonormal bases have same real orientation
$\Leftrightarrow$ they differ by a rotation
$\Leftrightarrow$ one can be continuously deformed into another.
This fails in the complex case, as even a reflection $(v_1,v_2,\ldots,v_p) \mapsto (-v_1,v_2,\ldots,v_p)$ results from continuous rotations $(e^{\im \varphi} v_1,v_2,\ldots,v_p)$, with $0\leq\varphi\leq\pi$.
In fact, any $U\in U(V)$ is in $V_\R$ a rotation (commuting with the complex structure), which is why it preserves the natural real orientation.

The action of $T \in GL(V)$ on the bases of $V$ descends to an action on $\OO_V$,
	\OMIT{$\beta = P \beta_0$ with $\det P>0 \Rightarrow T \beta = (TPT^{-1})(T \beta_0)$, with $\det(TPT^{-1})>0$}
whose identification with a circle allows a first interpretation for $\arg(\det T)$:

\begin{proposition}\label{pr:arg rotates}
	$T \in GL(V)$ rotates%
		\footnote{If $\F=\R$, the ``rotation'' is by $\varphi = 0$ ($T$ preserves orientations) or $\varphi = \pi$ ($T$ inverts them).}
	$\OO_V$ by $\varphi = \arg(\det T)$.
		\SELF{In the complex case, $\varphi = \sum_{j=1}^{\dim V} \arg(\lambda_j) \pmod{2\pi}$, where the $\lambda_j$'s are the eigenvalues of $T$ (counting multiplicities).}
\end{proposition}
\begin{proof}
	Fixed a basis $\beta_0$, the orientation of another $\beta = T_0(\beta_0)$, for $T_0\in GL(V)$, is identified, as above, with $e^{\im \varphi_0}$ for $\varphi_0 = \arg(\det T_0)$.
	And the orientation of $T(\beta) =  T(T_0(\beta_0))$ corresponds to $e^{\im \arg(\det(TT_0))}= e^{\im (\varphi + \varphi_0)}$.
\end{proof}

\begin{corollary}
	If $\det M(v_1,\ldots,v_n) \neq 0$, for $v_1,\ldots,v_n \in \C^n$, its argument is the phase difference from the canonical orientation to that of $(v_1,\ldots,v_n)$.
	\OMIT{The canonical basis and $(v_1,\ldots,v_n)$ are related by the linear transformation given by $M$.}
\end{corollary}

\begin{example}
	Given bases $\beta_0 = (v_1, v_2, \ldots,v_p)$,
	$\beta_1 = (e^{\im \varphi} v_1, v_2, \ldots, v_p)$ 
	and
	$\beta_2 = (e^{\im \frac{\varphi}{p}} v_1, e^{\im \frac{\varphi}{p}} v_2, \ldots,e^{\im \frac{\varphi}{p}} v_p)$ 
	of $V \subset \C^n$,
	$\beta_1$ and $\beta_2$
	have the same complex orientation, which differs from that of $\beta_0$ by the phase $\varphi$.
\end{example}

\subsection{Elementary transformations}
\label{Geometry of orientations and arguments}

A more geometric interpretation for $\arg(\det T)$ involves decomposing $T$.

For $V \subset \R^n$, recall that any $T\in GL(V)$ is a composition of line reflections%
\footnote{More precisely, reflections of a line (and its parallels) across a hyperplane, usually called \emph{hyperplane reflections}. As we generalize to complex and oblique reflections, referring to the line whose vectors are reflected (sent to their opposites) feels more adequate than to the hyperplane of fixed points (which, in the oblique case, does not act as a mirror).}, 
line scalings, shears, plane rotations, and transpositions of basis vectors.
The last two are included for convenience:
rotations are composed of shears and scalings, or pairs of line reflections,
and a transposition $v_j \leftrightarrow v_k$ is a reflection of $v_j - v_k$.
	\OMIT{$(v_1-v_2,v_1+v_2,v_3,\ldots,v_p) \mapsto (v_2-v_1,v_1+v_2,v_3,\ldots,v_p)$}
Also, $\sgn(\det T) = (-1)^N$ for $N =$ number of line reflections and transpositions, and there is always a decomposition with $N=0$ or $N=1$.

Being interested in volumes, not isometries, 
we define \emph{generalized line reflections and plane rotations} (\Cref{fig:rot_refl}) as linear transformations mapping a not necessarily orthonormal basis 
$(v_1,\ldots,v_p)$ of $V$ respectively to 
$(-v_1,v_2,\ldots,v_p)$ and
$(v_1 \cos \theta + v_2 \sin \theta, -v_1 \sin \theta + v_2 \cos \theta, v_3,\ldots,v_p)$, for $\theta \in [0,2\pi)$.
They preserve $p$-volumes,
	\OMIT{\ref{pr:det scale}}
being compositions of (orthogonal) reflections and rotations with shears (used to make the basis vectors orthogonal) and scalings by inverse factors (to make their norms equal).

\begin{figure}
	\centering
	\begin{subfigure}[b]{0.35\textwidth}
		\centering
		\includegraphics[width=\textwidth]{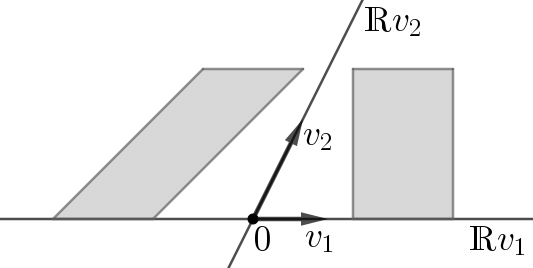}
		\caption{Generalized line reflection $(v_1,v_2) \mapsto (-v_1,v_2)$ of $\R v_1$, fixing the non-orthogonal $\R v_2$. The image of the rectangle is slanted, but has the same area. \\ }
		\label{fig:reflection}
	\end{subfigure} 
	\hspace{5mm}
	\begin{subfigure}[b]{0.58\textwidth}
		\centering
		\includegraphics[width=\textwidth]{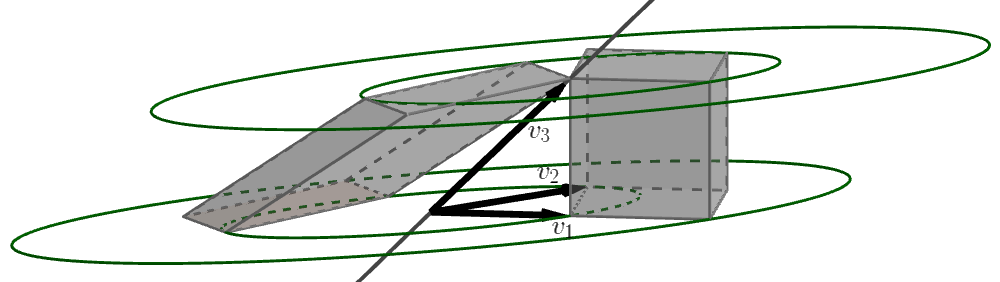}
		\caption{Generalized plane rotation $(v_1,v_2,v_3) \mapsto (v_1 \cos \theta + v_2 \sin \theta, -v_1 \sin \theta + v_2 \cos \theta, v_3)$. 
		The fixed ``rotation axis'' $\R v_3$ is not orthogonal to the ``rotation plane'' $\Span_\R\{v_1,v_2\}$,
		and orbits for $0\leq\theta\leq 2\pi$ are ellipses. The image of the cube is stretched by inverse factors and slanted, keeping the same volume.}
		\label{fig:rotation}
	\end{subfigure}
	\caption{Generalized line reflection and plane rotations, for non-orthonormal bases.}
	\label{fig:rot_refl}
\end{figure}

For $V \subset \C^n$, we use the following \emph{elementary complex transformations}:

\begin{definition} 
	If $T \in GL(V)$ maps a basis $(v_1,v_2,\ldots,v_p)$ of $V \subset \C^n$ to:
	\begin{enumerate}[label = (\alph*)]
		\item $(v_1+u,v_2,\ldots,v_p)$, for $u \in \Span_\C\{v_2,\ldots,v_p\}$, it is a \emph{complex shear}. \label{it:complex shear}				
		
		\item $(c v_1,v_2,\ldots,v_p)$, for $c > 0$, it is a \emph{complex line scaling}. \label{it:complex scaling}
		
		\item $(e^{\im \varphi} v_1,v_2,\ldots,v_p)$, for $\varphi \in [0,2\pi)$, it is a \emph{phase rotation}. \label{it:phase rotation}
		
		\item $(-v_1,v_2,\ldots,v_p)$, it is a \emph{generalized complex line reflection}. \label{it:complex reflection}
		
		\item $(v_2,v_1,v_3,\ldots,v_p)$, it is a \emph{complex transposition}. \label{it:complex transposition}
		
		\item $(v_1 \cos \theta + v_2 \sin \theta, -v_1 \sin \theta + v_2 \cos \theta, v_3,\ldots,v_p)$, for $\theta \in [0,2\pi)$,
		it is a \emph{generalized complex plane rotation}. \label{it:totally real rotation}
	\end{enumerate}
\end{definition}

The last three are a convenience:
\ref{it:complex reflection} and \ref{it:complex transposition}
are particular cases%
	\footnote{With $\varphi=\pi$, and $(v_1 - v_2,v_1+v_2,v_3,\ldots,v_p) \mapsto (e^{\im \pi}(v_1 - v_2),v_1+v_2,v_3,\ldots,v_p)$.}
of \ref{it:phase rotation},
and 
\ref{it:totally real rotation} is a composition of complex shears and scalings.
\Cref{pr:arg rotates} shows 
\ref{it:complex shear}, \ref{it:complex scaling} and \ref{it:totally real rotation} preserve complex orientations,
\ref{it:phase rotation} rotates $\OO_V$ by the phase $\varphi$, 
while \ref{it:complex reflection} and \ref{it:complex transposition} rotate it by $\pi$.
These $\C$-linear transformations move $v$ and $\im v$ in tandem, corresponding in $V_\R$ to the following real transformations (note that they all preserve real orientations):
\begin{enumerate}[label = (\roman*)]
	\item[\ref{it:complex shear}] 2 shears,  $(v_1,\im v_1,\ldots,v_p,\im v_p) 
	\mapsto (v_1+ u,\im v_1+ \im u,\ldots,v_p,\im v_p)$.
	
	\item[\ref{it:complex scaling}] 2 line scalings, of $\R v_1$ and $\R(\im v_1)$, by $c$ (so areas in $\C v_1$ scale by $c^2$).
	
	\item[\ref{it:phase rotation}] A generalized%
		\footnote{Restricted to $\Span_\R\{v_1,\im v_1\}$, it is a usual (orthogonal) rotation, but not on the whole $V_\R$.}
	rotation by $\varphi$ of the real plane $\Span_\R\{v_1,\im v_1\}$.
	
	\item[\ref{it:complex reflection}] 2 generalized line reflections, of $\R v_1$ and $\R(\im v_1)$, corresponding to a generalized rotation by $\pi$ of $\Span_\R\{v_1,\im v_1\}$.
		 	
	\item[\ref{it:complex transposition}] 2 transpositions, $v_1 \leftrightarrow v_2$ and $\im v_1 \leftrightarrow \im v_2$.
	
	\item[\ref{it:totally real rotation}] 2 generalized rotations by $\theta$ of the real planes $U = \Span_\R\{v_1,v_2\}$ and $\im U = \Span_\R\{\im v_1,\im v_2\}$ inside the complex plane $\Span_\C\{v_1,v_2\}$.
		\SELF{composition of complex shears and complex line scalings?}
\end{enumerate}

We can now interpret arguments of determinants as total phase rotations:

\begin{theorem}\label{pr:decomposition}
	Let $\beta = (v_1,\ldots,v_p)$ be a basis of $V \subset \C^n$ in which $T \in GL(V)$ has Jordan normal form, with eigenvalues $\lambda_1,\ldots,\lambda_p$ in its diagonal. 
	Then:
	\begin{enumerate}[label = (\roman*)]
		\item $T$ is a composition of complex shears, line scalings and phase rotations by $\varphi_j = \arg(\lambda_j)$ of the $\C v_j$'s,
		and $\arg(\det T) = \sum_{j=1}^p \varphi_j \pmod{2\pi}$. \label{it:T phases}
		
		\item $T$ is a composition of complex shears, line scalings and a single phase rotation by $\varphi = \arg(\det T)$. \label{it:T single phase}
		
		\item $T \in GL^+(V) \Leftrightarrow T$ is a composition of complex shears and line scalings. \label{it:GL+}
	\end{enumerate}
\end{theorem}
\begin{proof}
	\ref{it:T phases} 
	Each $\lambda_j$ rotates $\C v_j$ by $\varphi_j$ and scales it by $|\lambda_j|$,
	any superdiagonal $1$ causes a complex shear, and $\arg(\det T) = \arg\big(\prod_{j=1}^p \lambda_j\big) = \sum_{j=1}^p \varphi_j \pmod{2\pi}$.
	
	\ref{it:T single phase} The phase rotations in \ref{it:T phases} are produced by $R = \diag(e^{\im \varphi_1},\ldots,e^{\im \varphi_p})$ in the basis $\beta$. If $p=2$ we have, with $a = e^{\im \varphi_1}$,
	\begin{equation*}
		\begin{pmatrix}
			e^{\im \varphi_1} & 0 \\
			0 & e^{\im \varphi_2}
		\end{pmatrix} =
		\begin{pmatrix}
			1 & 0 \\
			0 & e^{\im (\varphi_1+\varphi_2)}
		\end{pmatrix}
		\begin{pmatrix}
			1 & 1-a \\
			0 & 1
		\end{pmatrix}
		\begin{pmatrix}
			1 & 0 \\
			-1 & 1
		\end{pmatrix}
		\begin{pmatrix}
			1 & 1-\bar{a} \\
			0 & 1
		\end{pmatrix}
		\begin{pmatrix}
			1 & 0 \\
			a & 1
		\end{pmatrix},	
	\end{equation*}
	so $R$ consists of complex shears and a single phase rotation of $\C v_2$ by $\varphi_1+\varphi_2$.
	If $p>2$, doing the same 2 phases at a time we can move them down the diagonal to obtain $R = \diag(1,\ldots,1,e^{\im\varphi}) \circ S$, where $\varphi = \sum_{j=1}^p \varphi_j = \arg(\det T)$
	and $S$ is a composition of complex shears.
	
	\ref{it:GL+} Follows from \ref{it:T single phase}.
\end{proof}


\begin{theorem}
	The canonical basis $\beta_0 = (e_1,\ldots,e_n)$ of $\C^n$ can be turned into any other $\beta = (v_1,\ldots,v_n)$ via \ref{it:complex shear}--\ref{it:totally real rotation}, 
		\SELF{\ref{it:totally real rotation} is not used, being included for convenience.}
	and $\arg(\det M(v_1,\ldots,v_n))$ is the total phase rotation used (mod $2\pi$, and including $\pi$ for each use of \ref{it:complex reflection} or \ref{it:complex transposition}).
\end{theorem}
\begin{proof}
	$v_1 = \sum_{j=1}^n \lambda_{1j} e_j$ with 
		\OMIT{$\lambda_{1j} \in \C$}
	$\lambda_{11} = c_1 e^{\im \varphi_1}$ for $c_1 > 0$, possibly after a transposition of $\beta_0$.
	Scaling $\C e_1$ by $c_1$, rotating it by $\varphi_1$, and applying a shear by $u_1 = \sum_{j=2}^n \lambda_{1j} e_j$, we obtain a basis $\beta_1 = (v_1,e_2,\ldots,e_n)$.
	As $\{v_1,v_2\}$ is linearly independent, $v_2 = \lambda_{21} v_1 + \sum_{j=2}^n \lambda_{2j} e_j$ with $\lambda_{22} = c_2 e^{\im \varphi_2}$ for $c_2 > 0$, possibly after a transposition of $\beta_1$.
	Scaling $\C e_2$ by $c_2$, rotating it by $\varphi_2$, and applying a shear by $u_2 = \lambda_{21} v_1 + \sum_{j=3}^n \lambda_{2j} e_j$, we turn $\beta_1$ into $\beta_2 = (v_1,v_2,e_3,\ldots,e_n)$.
	Proceeding like this, we can obtain $\beta$.
	As $M(v_1,\ldots,v_n)$ is a composition of all transformations used,
	$\arg(\det M)$ is (mod $2\pi$) the sum of $\varphi_1,\ldots,\varphi_n$ and a $\pi$ for each transposition.
\end{proof}

\begin{example}\label{ex:orientations}%
	In \Cref{ex:figura}, $\det M(u,v) > 0$, so the basis $(u,v)$ of $\C^2$ has the canonical complex orientation.
	Indeed, it differs from the canonical basis by complex scalings (produced by the diagonal elements $2$ and $3$ of $M$) and a complex shear (produced by the off-diagonal element $4\im$), with no phase rotations.
	And $(\im u, v)$ gives $\C^2$ a complex orientation differing from the canonical one by a phase $\frac\pi2$, corresponding to a phase rotation of $\C u$ taking $u$ to $\im u$.

	In \Cref{ex:totally real0}, $\arg(\det M(u,v)) = \pi$, so the complex orientation of $(u,v)$ is opposite the canonical one in the circle $\OO_{\C^2}$.
		\SELF{$(-u,v)$ and $(v,u)$ have the canonical one.}
	In \Cref{ex:square}, the complex orientation of the basis $\{v\}$ of $\C$ differs from the canonical one of $\{u\}$ by $\varphi = \frac\pi2$.
	In \Cref{ex:3 em C3 parte0}, that of $(u,v,w)$ differs from the canonical one of $\C^3$ by $\varphi = \arg(\det M) = \frac{3\pi}{4}$.
	In \Cref{ex:C3}, $(u,v)$ gives $V = \Span_\C\{u,v\} \subset \C^3$ a complex orientation, but $V$ has no canonical one for comparison.	

	In \Cref{ex:complex T},
	$T$ is diagonalizable, so it consists only of scalings and phase rotations of the complex lines of its eigenvectors,
	but with scaling factors and phases given by complicated eigenvalues.
	It also decomposes as
	\begin{equation*}
		T = \begin{pmatrix}
			1 & 1-\im \\
			0 & 1
		\end{pmatrix}\!\!
		\begin{pmatrix}
			1 & 0 \\
			-1 & 1
		\end{pmatrix}\!\!
		\begin{pmatrix}
			1 & 1+\im \\
			0 & 1
		\end{pmatrix}\!\!
		\begin{pmatrix}
			1 & 0 \\
			\im & 1
		\end{pmatrix}\!\!
		\begin{pmatrix}
			1 & 2-\im \\
			0 & 1
		\end{pmatrix}\!\!
		\begin{pmatrix}
			1 & 0 \\
			-\frac12 & 1
		\end{pmatrix}\!\! 
		\begin{pmatrix}
			4 e^{-\im \frac{\pi}{6}} & 0 \\
			0 & 1
		\end{pmatrix},
	\end{equation*}	
	with 6 complex shears, a single complex line scaling by $|\det T|=4$ and a single phase rotation by $\varphi = \arg(\det T) = -\frac{\pi}{6}$.
	A more interesting decomposition is
	\begin{equation}\label{eq:T}
		T =
		\begin{pmatrix}
			1 & \im-1 \\
			0 & 1
		\end{pmatrix}
		\begin{pmatrix}
			\cos \frac\pi4 & -\sin \frac\pi4 \\
			\sin \frac\pi4 & \cos \frac\pi4
		\end{pmatrix}
		\begin{pmatrix}
			e^{\im\frac{\pi}{3}} & 0 \\
			0 & e^{-\im\frac{\pi}{2}}
		\end{pmatrix}
		\begin{pmatrix}
			2\sqrt{2} & 0 \\
			0 & \sqrt{2}
		\end{pmatrix},
	\end{equation}
	with a complex shear $e_2 \mapsto e_2 + (\im-1) e_1$ and
	a complex plane rotation by $\frac\pi4$, which do not affect $\det T$;
	phase rotations by $\frac\pi3$ and $-\frac\pi2$,
	rotating $\OO_{\C^2}$ by a total phase $\varphi = -\frac\pi6$;
	and 2 complex line scalings, dilating the 2 real dimensions of $\C e_1$ by $2\sqrt{2}$, those of $\C e_2$ by $\sqrt{2}$, and 4-volumes by $(2\sqrt{2} \cdot \sqrt{2})^2 = 16 = |\det T|^2$.
	In the underlying $\R^4$, 
	the decomposition of $T_\R$ corresponding to \eqref{eq:T} is
	\begin{multline*}
		T_\R = 
		\begin{psmallmatrix}
			1 \,& 0 & -1 & -1 \\[2pt]
			0 \,& 1 & 1 & -1 \\[2pt]
			0 \,& 0 & 1 & 0 \\[2pt]
			0 \,& 0 & 0 & 1
		\end{psmallmatrix}
		\begin{psmallmatrix}
			\cos \frac\pi4 & 0 & -\sin \frac\pi4 & 0 \\
			0 & \cos \frac\pi4 & 0 & -\sin \frac\pi4 \\
			\sin \frac\pi4 & 0 & \cos \frac\pi4 & 0 \\
			0 & \sin \frac\pi4 & 0 & \cos \frac\pi4
		\end{psmallmatrix}
		\cdot \\ \cdot
		\begin{psmallmatrix}
			\cos \frac\pi3 & -\sin \frac\pi3 & 0 & 0 \\
			\sin \frac\pi3 & \cos \frac\pi3 & 0 & 0 \\
			0 & 0 & \cos \frac\pi2 & \sin \frac\pi2 \\
			0 & 0 & -\sin \frac\pi2 & \cos \frac\pi2
		\end{psmallmatrix}
		\begin{psmallmatrix}
			2\sqrt{2} & 0 & 0 & 0 \\
			0 & 2\sqrt{2} & 0 & 0 \\
			0 & 0 & \,\sqrt{2}\, & 0 \\
			0 & 0 & 0 & \,\sqrt{2}
		\end{psmallmatrix},
	\end{multline*}
	with 2 real shears%
		\footnote{$f_3 \mapsto f_3 - f_1 + f_2$ and $f_4 \mapsto f_4 - f_1 - f_2$, for the canonical basis $(f_1,\ldots,f_4)$ of $\R^4$.}
	and 4 real plane rotations (by $\frac\pi4$, $\frac\pi4$, $\frac\pi3$ and $-\frac\pi2$), which do not affect $\det T_\R$; and 4 real line dilations (by $2\sqrt{2}$, $2\sqrt{2}$, $\sqrt{2}$ and $\sqrt{2}$) which dilate 4-volumes by $\det T_\R = 16$.
	Note how these transformations preserve $\OO_{\R^4}$.
\end{example}

\section{Multivectors}\label{sc:Complex blades}

First we review Grassmann algebras \cite{Bourbaki1989,Greub1978,Rosen2019,Shaw1983},
	\CITE{Shaw inclui álgebra complexa, usa decomposable multivector; Rosen usa simple multivector, faz só real}
which in the complex case seem more ``geometric'' than Clifford ones (as we discuss in \Cref{sc:Complex numbers and Geometric Algebra}).

The \emph{Grassmann (exterior) algebra} of a $p$-subspace $V \subset \F^n$
is a graded algebra $\bigwedge V = \bigoplus_{k=0}^p \bigwedge^k V$
of \emph{multivectors},
with an $\F$-bilinear, associative and alternating%
	\footnote{This means $v\wedge v=0$, and so $u \wedge v = -v\wedge u$, for $u,v\in V$.}
\emph{exterior product} $\wedge$.
While $\bigwedge^0 V = \F$, elements of $\bigwedge^k V$ for $k \geq 1$ are sums of \emph{$k$-blades}
$B = v_1\wedge\cdots\wedge v_k$ for $v_1,\ldots,v_k\in V$ (also called \emph{simple} or \emph{decomposable $k$-vectors}).
This decomposition of $B$ is not unique,
but
$B \neq 0 \Leftrightarrow v_1,\ldots,v_k$ are linearly in\-de\-pend\-ent,
in which case $B$ determines
	\OMIT{If $u_1\wedge\cdots\wedge u_k = v_1\wedge\cdots\wedge v_k \neq 0$ then $\Span\{u_1,\ldots,u_k\} = \Span\{v_1,\ldots,v_k\}$, as, for all $j$, $u_j \wedge v_1\wedge\cdots\wedge v_k = u_j \wedge u_1\wedge\cdots\wedge u_k = 0$ implies $u_j \in \Span\{v_1,\ldots,v_k\}$.}
a $k$-subspace $[B] = \Span\{v_1,\ldots,v_k\}$,
	\OMIT{$= \{v\in V:v\wedge B = 0\}$}
oriented%
	\footnote{If $B = u_1 \wedge \cdots \wedge u_k$ is another decomposition, 
	$(u_1,\ldots,u_k)$ has the same orientation since the change-of-basis transformation given by $T v_j = u_j$ has $\det T = 1$, by \eqref{eq:outermorphism}.}
by $(v_1,\ldots,v_k)$.
For another $k$-blade $A \neq 0$, 
$[A] = [B] \Leftrightarrow A = cB$ for $c\in \F$, 
and orientations coincide when $c>0$, by \eqref{eq:outermorphism}.

The inner (Hermitian, if $\F=\C$) product extends to $\bigwedge V$, with%
	\footnote{It differs from Hestenes inner product of Geometric Algebra by a reversion, $A\cdot B = \inner{\tilde{A},B}$.}
$\inner{A,B} = \det\!\big(\inner{u_i,v_j}\big)$ for $A=u_1\wedge\cdots\wedge u_k$ and $B=v_1\wedge\cdots\wedge v_k$.
The norm of $B$ is
\begin{equation}\label{eq:norm}
	\|B\|=\sqrt{\inner{B,B}} = \sqrt{\det G(v_1,\ldots,v_k)}.
\end{equation}
If $[A]$ is $\F$-orthogonal to $[B]$ then $\|A \wedge B\|=\|A\|\|B\|$.
For $u,v \in \F^n$ we have%
	\footnote{Recall that $\gamma_{u,v}$ is the Hermitian angle (\Cref{sc:Determinants}).}
\begin{equation}\label{eq:norm 2-blades}
	\|u \wedge v\| = 
	\begin{cases}
		\|u\|\|v\|\sin\theta_{u,v} &\text{in the real case}, \\
		\|u\|\|v\|\sin\gamma_{u,v} &\text{in the complex case}.
	\end{cases}
\end{equation}

If $\beta = \{v_1,\ldots,v_p\}$ is a basis of $V$,
$\{v_{i_1}\wedge\cdots\wedge v_{i_k}: 1\leq i_1 < \cdots<i_k\leq p\}$ is one of $\bigwedge^k V$. It is orthonormal if so is $\beta$.
So, $\dim \bigwedge^k V = \binom{p}{k}$ and $\dim\bigwedge V = 2^p$.

A linear transformation $T$ of $V = \Span\{v_1,\ldots,v_p\}$ induces another (an \emph{outermorphism}) $\bigwedge^p T$ of $\bigwedge^p V = \Span\{v_1\wedge\cdots\wedge v_p\}$, with
\begin{equation}\label{eq:outermorphism}
	{(\textstyle \bigwedge^p T)}(v_1 \wedge \cdots\wedge v_p) = (T v_1) \wedge\cdots\wedge(T v_p) = (\det T) v_1 \wedge \cdots\wedge v_p.
\end{equation}
This gives a well known (but somewhat abstract) interpretation for $\det T$:
\begin{itemize}
	\item  $|\det T|$ is the factor by which $\bigwedge^p T$ scales $\bigwedge^p V \cong \F$.\label{it:|det| outermorphism}
	
	\item If $\det T \neq 0$, its argument is the angle by which $\bigwedge^p T$ rotates $\bigwedge^p V$. \label{it:arg rotates bigwedge}
\end{itemize}
Note that the scaling factor does not depend on $\F$.
As \Cref{pr:blade norms} will show, the square in \Cref{pr:det scale}\ref{it:complex det scale} is due to how complex blades relate to volumes.

The following alternative definitions for $\OO_V$ are equivalent to \Cref{df:orientation}, by \eqref{eq:outermorphism},
and \ref{it:orientation unit blades} clearly shows $\OO_V$ as a unit circle in $\bigwedge^p V \cong \F$:
\begin{enumerate}[label = (\roman*)]
	\item $\OO_V = \{0\neq B \in \bigwedge^p V\}/\!\sim$, with $B_1 \sim B_2$ if $B_2 = c B_1$ for $c>0$;\label{it:orientation blades}	
	
	\item $\OO_V = \{B \in\bigwedge^p V: \|B\|=1\}$.\label{it:orientation unit blades}
\end{enumerate}

The Grassmann algebras of $\C^n$ and $(\C^n)_\R = \R^{2n}$ are different, as an exterior product is  $\C$-bilinear and the other is $\R$-bilinear.
And $\bigwedge (\C^n)_\R \neq (\bigwedge \C^n)_\R$, as their $\R$-dimensions are $2^{2n}$ and $2^{n+1}$, respectively.
	\OMIT{$= 2 \dim_\C \bigwedge \C^n$}

\begin{notation}
	In the complex case, we use $\rwedge$ for the $\R$-bilinear exterior product of $\bigwedge (\C^n)_\R$, to distinguish it from the $\C$-bilinear exterior product $\cwedge$ of $\bigwedge \C^n$.
\end{notation}

\subsection{Blades -- 1st interpretation}\label{sc:blades}

A blade $B = v_1 \wedge \cdots \wedge v_p  \neq 0$ conveys 3 data pieces:
a number $c = \|B\|$, 
associated to a $p$-subspace $[B] =\Span\{v_1,\ldots,v_p\}$,
oriented by the basis $(v_1,\ldots,v_p)$.
And such data determines $B$ uniquely, as given an orthonormal basis $(u_1,\ldots,u_p)$ of same orientation for the subspace, we have $B = c u_1 \wedge\cdots\wedge u_p$.

The meaning of $\|B\|$ depends on that of $B$:
for a 1-blade (a vector), it can be a length, velocity, strength of a force, etc.;
a 2-blade can be a magnetic field,
	\SELF{To be precise, at each point it is a 2-blade in the exterior algebra of the dual space $(\R^3)^*$.}
whose intensity is its norm; etc.
A common interpretation in the real case is that $\|B\|$ is a volume,
but the complex case is different (see also \Cref{pr:complex real concepts}):

\begin{theorem}\label{pr:blade norms}
	Let $B = v_1 \wedge \cdots \wedge v_p \in \bigwedge^p \F^n$ for $v_1,\ldots,v_p \in \F^n$.
	\begin{enumerate}[label = (\roman*)]
		\item $\|B\|=\VV_p(v_1,\ldots,v_p)$, in the real case.\label{it:real blade}
		
		\item $\|B\| = \sqrt{\VV_{2p}(v_1,\im v_1,\ldots,v_p,\im v_p)}$, in the complex case.\label{it:complex blade}
	\end{enumerate}
\end{theorem}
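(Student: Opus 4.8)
The plan is to treat this theorem as a direct corollary of \Cref{pr:Gramian}, since the blade norm was \emph{defined} (just above the statement) precisely so that $\|B\| = \sqrt{|G(v_1,\ldots,v_p)|}$, i.e.\ $\|B\|^2 = |G(v_1,\ldots,v_p)|$. First I would make this defining identity explicit: the inner product on $\bigwedge^p X$ is the one for which $\Herm{v_1\wedge\cdots\wedge v_p,\, w_1\wedge\cdots\wedge w_p} = \det\big(\inner{v_i,w_j}\big)$, so setting $w_i = v_i$ gives $\|B\|^2 = \det G(v_1,\ldots,v_p) = |G(v_1,\ldots,v_p)|$ in both the real and complex cases. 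Once this is in place, the two parts follow by substitution.

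For \ref{it:real blade}, I would invoke \Cref{pr:Gramian}\ref{it:real Gramian}, which gives $|G(v_1,\ldots,v_p)| = \VV(v_1,\ldots,v_p)^2$; taking the nonnegative square root yields $\|B\| = \VV(v_1,\ldots,v_p)$. For \ref{it:complex blade}, I would invoke \Cref{pr:Gramian}\ref{it:complex Gramian}, which gives $|G(v_1,\ldots,v_p)| = \VV(v_1,\im v_1,\ldots,v_p,\im v_p)$, and this is already exactly the claimed identity $\|B\|^2 = \VV(v_1,\im v_1,\ldots,v_p,\im v_p)$, with no square root to take. The degenerate case is automatically consistent: if $(v_1,\ldots,v_p)$ is linearly dependent then $B = 0$, $|G| = 0$, and the corresponding volume vanishes, so both sides are $0$.

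There is no real analytic obstacle here, and I would present the theorem honestly as a reformulation of \Cref{pr:Gramian} in the language of the exterior algebra. The one point that genuinely needs care is the very first step: verifying that the inner product placed on $\bigwedge^p X$ reproduces the Hermitian Gramian $|G|$ rather than some real variant such as $|G_\R|$. This is where the complex case could silently go wrong, since \Cref{ex:complex Gramian} shows $|G| \neq |G_\R|$ in general; the claim rests on using the Hermitian (conjugate-linear in the first entry) pairing to build the norm, exactly as stipulated in the sentence defining $\|B\|$. Confirming that this is the intended inner product — and hence that $\|B\|^2 = |G|$ and not $|G_\R|$ — is the only step I would slow down to justify before letting \Cref{pr:Gramian} finish the argument.
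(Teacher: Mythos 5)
Your proposal is correct and matches the paper's own proof, which is literally the one line ``Follows from \Cref{pr:Gramian}'': the norm is defined by $\|B\|^2 = |G(v_1,\ldots,v_p)|$, and the two parts are immediate from \Cref{pr:Gramian}\ref{it:real Gramian} and \ref{it:complex Gramian}. Your extra care in checking that the exterior-algebra inner product yields the Hermitian Gramian $|G|$ rather than $|G_\R|$ is a sensible elaboration of the same argument, not a different route.
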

\begin{proof}
	Follows from \eqref{eq:norm} and \Cref{pr:Gramian}.
\end{proof}

Due to \ref{it:real blade}, in the real case
$B$ is usually represented by $\PP(v_1,\ldots,v_p) \subset [B]$, oriented by $(v_1,\ldots,v_p)$.
As its decomposition is not unique, $B$ is actually an equivalence class of all oriented parallelotopes (or even regions) of same $p$-volume $\|B\|$, in the same subspace, with same orientation, as any of them carries all data needed to determine $B$.

By \ref{it:complex blade}, neither $\PP(v_1,\ldots,v_p)$ nor $\PP(v_1,\im v_1,\ldots,v_p,\im v_p)$ can represent $B$ adequately in the complex case%
	\footnote{It is not even possible to redefine $\|B\|$ to match the volumes of these parallelotopes,
	as we would still have $\|(\lambda v_1)\wedge v_2\wedge\cdots\wedge v_p\| = |\lambda|\cdot \|v_1\wedge v_2\wedge\cdots\wedge v_p\|$ for $\lambda \in \C$, but  
	$\VV_p(\lambda v_1,\ldots,v_p) \neq |\lambda| \cdot \VV_p(v_1,\ldots,v_p)$,
	as seen, and $\VV_{2p}(\lambda v_1,\im \lambda  v_1,\ldots,v_p,\im v_p) = |\lambda|^2 \cdot \VV_{2p}(v_1,\im v_1,\ldots,v_p,\im v_p)$.}.
In \Cref{sc:2nd interpretation} we will obtain a representation, after linking $B$ to $v_1\rwedge \cdots \rwedge v_p \in \bigwedge(\C^n)_\R$ in \Cref{sc:Holomorphy}.
	\OMIT{\ref{pr:blades complex real}}
But first we have:

\begin{corollary}\label{pr:norm cwedge}
	$\|v_1\cwedge \cdots \cwedge v_p\|^2 = \|v_1 \rwedge \im v_1 \rwedge \cdots \rwedge v_p \rwedge \im v_p\|$ for $v_1,\ldots, v_p \in \C^n$.
\end{corollary}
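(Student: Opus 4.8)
The plan is to reduce both sides to the same geometric volume by invoking the two parts of \Cref{pr:blade norms}, so that the corollary becomes a direct comparison rather than a fresh computation. First I would read the left-hand side as the squared norm of a complex $p$-blade in $\bigwedge^p\C^n$; part \ref{it:complex blade} of \Cref{pr:blade norms} then gives at once
\[
	\|v_1\cwedge \cdots \cwedge v_p\|^2 = \VV(v_1,\im v_1,\ldots,v_p,\im v_p).
\]
Next I would read the right-hand side as the norm of a genuine real $2p$-blade in $\bigwedge^{2p}\R^{2n}$, assembled from the $2p$ real vectors $v_1,\im v_1,\ldots,v_p,\im v_p$ regarded in $X_\R=\R^{2n}$. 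Applying part \ref{it:real blade} (the real case) to this list of $2p$ vectors yields
\[
	\|v_1 \rwedge \im v_1 \rwedge \cdots \rwedge v_p \rwedge \im v_p\| = \VV(v_1,\im v_1,\ldots,v_p,\im v_p).
\]
Comparing the two right-hand sides would then close the argument.

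The hard part will not be either display but checking that the symbol $\VV$ denotes the very same number in both. On the complex side $\VV(v_1,\im v_1,\ldots,v_p,\im v_p)$ is, by \Cref{df:main}\ref{it:volume}, a $2p$-dimensional volume computed in the underlying real space $X_\R$ via $\R$-orthogonality; on the real side the identical definition is applied directly in $\R^{2n}$ to the identical list of vectors. Since the complex-case $\VV$ is routed through $X_\R$ precisely so as to agree with the real-case one, the two Gramians $|G|$ and $|G_\R|$ feeding the two norms are tied together exactly as in \Cref{pr:Gramian}, and no discrepancy can arise. In effect the corollary only records that the complex and real blade norms both measure the same parallelotope $P(v_1,\im v_1,\ldots,v_p,\im v_p)$, one as a squared root-volume and the other as a straight volume; once this identification of the two $\VV$'s is made explicit, the statement follows immediately from \Cref{pr:blade norms}.
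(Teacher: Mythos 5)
Your proposal is correct and is essentially the argument the paper intends: the corollary is stated without proof precisely because it follows by applying \Cref{pr:blade norms}\ref{it:complex blade} to the left side and \Cref{pr:blade norms}\ref{it:real blade} (in $\R^{2n}$) to the right side, both yielding $\VV(v_1,\im v_1,\ldots,v_p,\im v_p)$. Your check that the two occurrences of $\VV$ coincide is exactly the right point to verify, and it holds since \Cref{df:main}\ref{it:volume} defines $\VV$ in the complex case as the ordinary volume in $X_\R$.
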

\begin{proof}
	By \Cref{pr:blade norms}, both sides give $\VV_{2p}(v_1,\im v_1,\ldots,v_p,\im v_p)$.
\end{proof}

Below, $(e_1,\ldots,e_n)$ and $(f_1,\ldots,f_{2n})$ are the canonical bases of $\C^n$ and of the underlying $\R^{2n}$, so $f_1=e_1$, $f_2=\im e_1$, \ldots, $f_{2n-1}=e_n$, $f_{2n}=\im e_n$.
We use the notation $e_{12} = e_1 \wedge e_2$, $f_{134} = f_1 \rwedge f_3 \rwedge f_4$, etc.
Note how calculations in $\bigwedge \C^n$ are simpler than in $\bigwedge \R^{2n}$, as $\dim_\R \bigwedge\R^{2n}  = (\dim_\C \bigwedge \C^n)^2$.
	\OMIT{$= 2^{2n}$}

\begin{example}\label{ex:figura3}%
		\OMIT{$u,v \in\C^2$, Figura, pure real, \Cref{ex:figura,ex:orientations,ex:figura3,ex:index,ex:figura6}}%
	In \Cref{ex:figura},
	$u = 2e_1$ and $v = 4\im e_1 + 3 e_2$,  so $u \cwedge v = 6 e_{12}$.
	In the underlying $\R^4$,
	$u =2f_1$, $\im u = 2f_2$, $v =4f_2+3f_3$ and $\im v =-4f_1+3f_4$, 
	so 
	$u \rwedge v = 8f_{12} + 6f_{13}$
	and
	$\im u \rwedge \im v = 8f_{12} + 6f_{24}$,
	while
	$u \rwedge \im u \rwedge v \rwedge \im v = 36f_{1234}$.
	Thus 
	$\|u \rwedge v\| = \|\im u \rwedge \im v\| = \sqrt{8^2+6^2} = 10 = \VV_2(u,v) \neq \|u \cwedge v\| = 6$
	and 
	$\|u \cwedge v\|^2 = \|u \rwedge \im u \rwedge v \rwedge \im v\| = 36 = \VV_4(u,\im u,v,\im v)$.
\end{example}

\begin{example}\label{ex:totally real}%
		\OMIT{$u,v\in\C^2$, tot real, \Cref{ex:totally real0,ex:orientations,ex:totally real}}%
	In \Cref{ex:totally real0}, $u=5\im e_1$ and $v=4\im e_2$, so $u\wedge v = -20 e_{12}$.
	In $\R^4$, $u=5 f_2$, $\im u = -5 f_1$, $v= 4f_4$ and $\im v = -4 f_3$,
	so 
	$u \rwedge v = 20 f_{24}$
	and
	$u \rwedge \im u \rwedge v  \rwedge \im v = 400 f_{1234}$.
	In this case,
	$\|u\wedge v\| = \|u\rwedge v\| = 20 = \VV_2(u,v)$ and $\|u \wedge v\|^2 = \|u \rwedge \im u \rwedge v  \rwedge \im v\| = 400 = \VV_4(u,\im u,v,\im v)$.
\end{example}

\begin{example}\label{ex:square2}%
		\OMIT{$u,v \in \C$, square $S$, holom, só $u$ tot real, \Cref{ex:square,ex:orientations,ex:square2} (\ref{ex:square4} removido)}%
	In \Cref{ex:square},
	with $u$ and $v$ as vectors,
	$u \wedge v = \im (u \wedge u) = 0$.
	In $\R^2$,
	$u = 2 f_1$, $v = 2 f_2$, and
	$u \rwedge v = 4 f_{12}$ is represented by $S$, of area $\|u \rwedge v\| = 4$, while $u \rwedge \im u \rwedge v \rwedge \im v = u \rwedge \im u \rwedge \im u \rwedge (-u) = 0$.
\end{example}

\begin{example}\label{ex:3 em C3 parte1}%
		\OMIT{$u,v,w\in \C^3$, \Cref{ex:3 em C3 parte0,ex:orientations,ex:3 em C3 parte1,ex:index}}
	In \Cref{ex:3 em C3 parte0},
	$u = e_1 - \im e_3$,
	$v = (1+\im)e_1 + e_2$ and 
	$w = (1+2\im)e_1 - e_2 + \im e_3$,
	so $u \wedge v \wedge w = (3\im-3) e_{123}$.
	In $\R^6$,
	\begin{align*}
		u &= f_1 - f_6, & \im u &= f_2 + f_5, \\
		v &= f_1 + f_2 + f_3, & \im v &= -f_1 + f_2 + f_4, \\
		w &= f_1 + 2 f_2 - f_3 + f_6, & \im w &= -2 f_1 + f_2 - f_4 - f_5,
	\end{align*}
	so
	$u \rwedge v \rwedge w = -3f_{123} + 3f_{136} + 3 f_{236}$
	and 
	$u \rwedge \im u \rwedge v \rwedge \im v \rwedge w \rwedge \im w = 18 f_{123456}$.
	Thus
	$\|u \rwedge v \rwedge w\| = \sqrt{27} = \VV_2(u,v,w)$
	and $\|u \rwedge \im u \rwedge v \rwedge \im v \rwedge w \rwedge \im w\| = \|u \wedge v \wedge w\|^2 = 18 = \VV_6(u,\im u,v,\im v,w,\im w)$.
\end{example}

\begin{example}\label{ex:C3 nova}%
		\OMIT{$u,v\in\C^3$, G, \Cref{ex:orientations,ex:C3 nova}}%
	In \Cref{ex:C3}, 
	$u = (1+2\im) e_1 + 3\im e_3$ and $v = 2e_1 + \im e_2 + (3+\im) e_3$,
	so
	$u \cwedge v = (\im-2) e_{12} + (1+\im)e_{13} + 3e_{23}$.
	Lengthy calculations, as above,
	give $\|u \rwedge v\| = \sqrt{185} = \VV_2(u,v)$ 
	and 
	$\|u \rwedge \im u \rwedge v \rwedge \im v\| = \|u\wedge v\|^2 = 16 = \VV_4(u,\im u,v,\im v)$.
\end{example}

\begin{example}%
		\OMIT{$T$ in $\C^2$, sem 0s, \Cref{ex:complex T,ex:orientations}, (\ref{ex:blade norms} removido)}
	In \Cref{ex:complex T}, 
	$(\bigwedge^2 T)(e_{12}) = (T e_1) \wedge (T e_2) = [(\im - \sqrt{3}) e_1 + (1 +\im \sqrt{3}) e_2] \wedge [(1+2\im)e_1 - \im e_2]
	= 4 e^{-\im \frac\pi6} e_{12} = (\det T) e_{12}$,
	and so $\bigwedge^2 T$ dilates $\bigwedge^2 \C^2 = \Span_\C\{e_{12}\} \cong \C$ by $4 = |\det T|$ (areas dilate by $16$) and rotates it by $\varphi = -\frac\pi6 = \arg(\det T)$.
	As one can check,
	$(\bigwedge^4 T_\R)(f_{1234}) = 16 f_{1234}$,
	so $\bigwedge^4 T_\R$ dilates $\bigwedge^4 \R^4 = \Span_\R\{f_{1234}\} \cong \R$ by $16 = \det T_\R$ and preserves orientation.
\end{example}

\subsection{Volumetric Pythagorean theorem}\label{sc:Pythagorean}

Now we give a simpler proof for a volumetric Pythagorean theorem  (\Cref{fig:Pythagorean}) from \cite{Mandolesi_Pythagorean}.
Other results from that article can be proven similarly.

\begin{figure}
	\centering
	\begin{subfigure}[b]{0.47\textwidth}
		\centering
		\includegraphics[width=.93\textwidth]{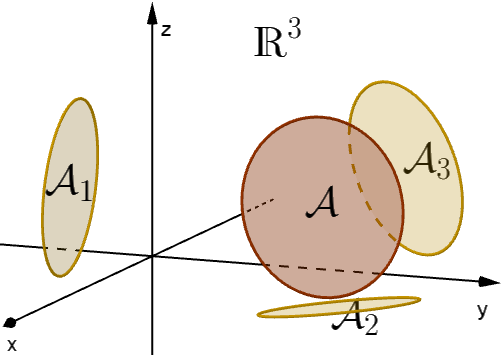}
		\caption{$\AA^2= \AA_1^2+\AA_2^2+\AA_3^2$ for a plane area $\AA$ with orthogonal projections $\AA_1,\AA_2,\AA_3$ on the coordinate planes $xz$, $xy$, $yz$ of $\R^3$.}
		\label{fig:projecao_planos}
	\end{subfigure} 
	\hspace{1mm}
	\begin{subfigure}[b]{0.50\textwidth}
		\centering
		\includegraphics[width=.75\textwidth]{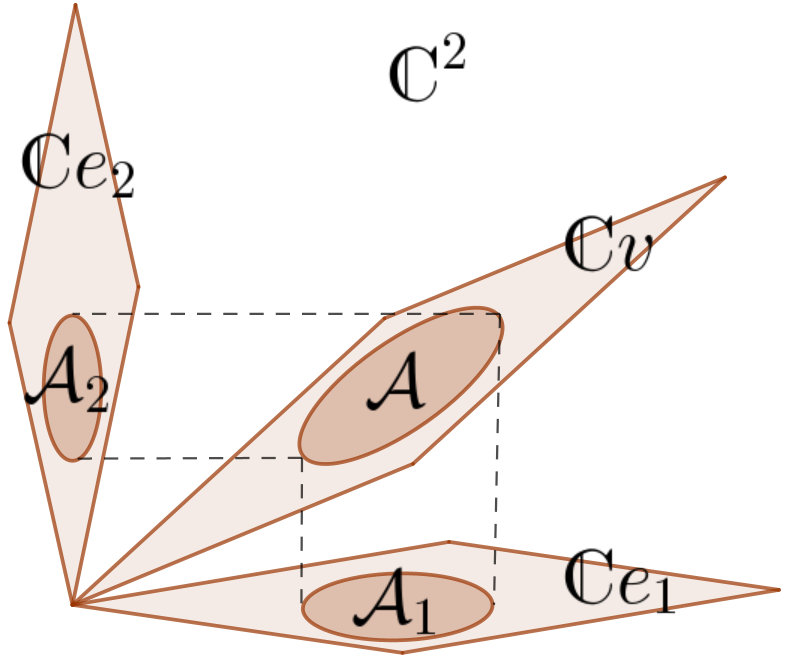}
		\caption{$\AA=\AA_1+\AA_2$ for an area $\AA$ in $\C v$ with orthogonal projections $\AA_1,\AA_2$ on the complex lines of an orthogonal basis $(e_1,e_2)$ of $\C^2$.}
		\label{fig:complex_lines}
	\end{subfigure}
	\caption{Real and complex volumetric Pythagorean theorems}
	\label{fig:Pythagorean}
\end{figure}

Let $(e_1,\ldots,e_n)$ be an orthonormal basis of $\F^n$.
The associated orthonormal basis of $\bigwedge^p \F^n$ is
$\{e_I = e_{i_1} \wedge \cdots \wedge e_{i_p}:I=(i_1,\ldots,i_p), 1\leq i_1<\cdots<i_p\leq n\}$.
Each $[e_I] = \Span\{e_{i_1},\ldots,e_{i_p}\}$ is a \emph{coordinate $p$-subspace}.

\begin{theorem}
	If a subset of a $p$-subspace $V\subset \F^n$ has $k$-volume $\VV$ (with $k=p$ if $\F=\R$, or $k=2p$ if $\F=\C$), and its orthogonal projection on $[e_I]$ has $\VV_I$, then
	$\VV^2 = \sum_I \VV_I^2$ if $\F=\R$, or
	$\VV = \sum_I \VV_I$ if $\F=\C$.
\end{theorem}
\begin{proof}
	Given a basis $\{v_1,\ldots,v_p\}$ of $V$,
	it is enough to prove it for 
	$\PP(v_1,\ldots,v_p)$  if $\F=\R$,
	or $\PP(v_1,\im v_1,\ldots,v_p,\im v_p)$ if $\F=\C$.
	The orthogonal projection of $B = v_1\wedge \cdots \wedge v_p$ on $\Span\{e_I\}$ is $B_I = P_{[e_I]} v_1 \wedge \cdots \wedge P_{[e_I]} v_p$,
	so the result follows from $\|B\|^2 = \sum_I \|B_I\|^2$ and \Cref{pr:blade norms}.
\end{proof}

In the complex case, one might expect orientations to play a role in how the $\VV_I$'s combine to form $\VV$.
This does not happen because the phase of each orthogonal projection $B_I$ is aligned with that of $B$, in the sense that $\inner{B_I,B} = \|B_I\|^2 \geq 0$.

\begin{example}
	In \Cref{ex:C3 nova}, $\PP = \PP(u,\im u,v,\im v)$ has  4-volume $\VV = \|B\|^2 = 16$, where $B= u \wedge v$.
	Its orthogonal projection on $[e_{12}] = \Span_\C\{e_1,e_2\}$ is the parallelotope spanned (writing $P=P_{[e_{12}]}$) by $Pu = (1+2\im)e_1$, $Pv = 2e_1 + \im e_2$, $\im Pu$ and $\im Pv$, with $\VV_{12} = \|B_{12}\|^2 = 5$ for $B_{12} = P(B) = Pu \wedge Pv = (\im-2)e_{12}$.
	Likewise, $V_{13} = \|B_{13}\|^2 = 2$ with $B_{13} = (1+\im) e_{13}$, and $V_{23} = \|B_{23}\|^2 = 9$ with $B_{23} = 3 e_{23}$,
	so that $\VV_{12} + \VV_{13} + \VV_{23} = 5+2+9 = \VV$.
	
	In the underlying $\R^6$, $\PP$ is represented by $A = u \rwedge \im u \rwedge v \rwedge \im v$, with
	\begin{align*}
		u &= f_1 + 2f_2 + 3 f_6, & \im u &= f_2 - 2f_1 - 3 f_5, \\
		v &= 2f_1 + f_4 + 3 f_5 + f_6, & \im v &= 2f_2 - f_3 + 3 f_6 - f_5,
	\end{align*}
	and $\VV = \|A\| = 16$.
	Its orthogonal projection on $[f_{1234}] = \Span_\R\{f_1,\ldots,f_4\}$ (the underlying real space of $[e_{12}]$) is represented by $A_{1234} = (f_1 + 2f_2) \rwedge (f_2 - 2f_1) \rwedge (2f_1 + f_4) \rwedge (2f_2 - f_3) = 5 f_{1234}$, so $V_{1234} = \|A_{1234}\| = 5$ (same as the above $\VV_{12}$).
	Similar calculations for all 15 coordinate 4-subspaces confirm that the sum of the squared projected volumes equals $\VV^2$.
\end{example}

Note how the relation between volumes is simpler in the complex case not only because they are not squared, but also because there are less coordinate subspaces.

\section{Holomorphy}\label{sc:Holomorphy}

As in \Cref{rm:iP P non orthog}, $\|v_1 \rwedge \im v_1 \rwedge \cdots \rwedge v_p \rwedge \im v_p\| \neq \|v_1\rwedge \cdots \rwedge v_p\|^2$, 
in general,
and so $\|v_1\cwedge \cdots \cwedge v_p\| \neq \|v_1\rwedge \cdots \rwedge v_p\|$ by \Cref{pr:norm cwedge},
because $\PP(v_1,\ldots,v_p)$ and
$\PP(\im v_1,\ldots, \im v_p)$ are not orthogonal.
To relate these norms we must see how the complex structure $\im$ rotates $V = \Span_\R\{v_1,\ldots, v_p\}$ to $\im V = \Span_\R\{\im v_1,\ldots, \im v_p\}$.

\begin{definition}
	A real subspace $V \subset (\C^n)_\R$ is 
	\emph{holomorphic} if $V = \im V $;
	it is \emph{purely real}%
		\footnote{The terminology is from \cite{Goldman1999}. Some authors use totally real to mean purely real.}
		\CITE{Rosenfeld1997 uses the term \emph{antiholomorphic} for totally real planes.}
	if $V \cap \im V = \{0\}$;
	and 
	\emph{totally real} if $V \perp_\R \im V$.
\end{definition}
\begin{figure}
	\centering
	\begin{subfigure}[b]{0.32\textwidth}
		\includegraphics[width=\textwidth]{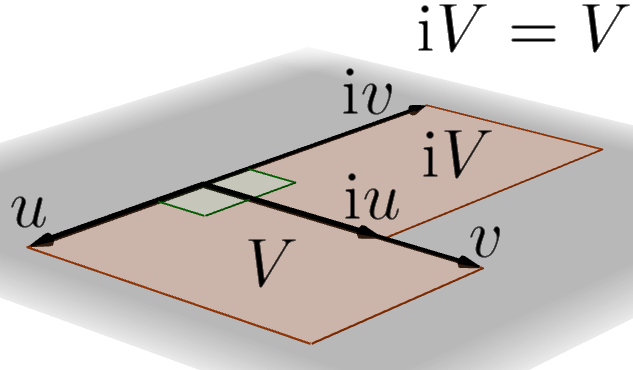}
		\caption{Holomorphic: if $v \in \C u$ then $\im$ rotates $V$ by $90^\circ$ inside itself, and $\im V = V = \C u$.}
		\label{fig:holomorfico}
	\end{subfigure} 
	\hspace{.15mm}
	\begin{subfigure}[b]{0.33\textwidth}
		\includegraphics[width=\textwidth]{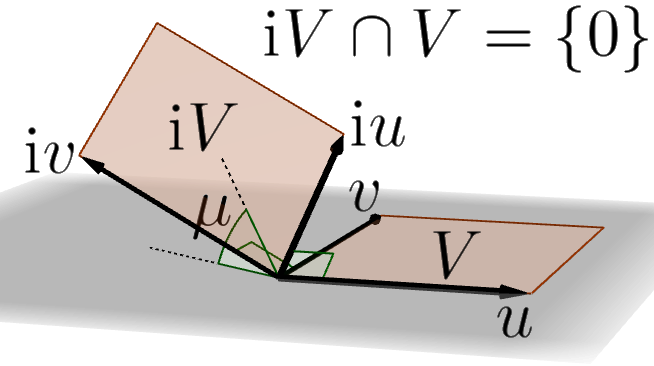}
		\caption{Purely real: $\im$ ``lifts'' $V$ by a Kähler angle $\mu$ while rotating its vectors $90^\circ$.}
		\label{fig:pure real}
	\end{subfigure}
	\hspace{.15mm}
	\begin{subfigure}[b]{0.31\textwidth}
		\includegraphics[width=\textwidth]{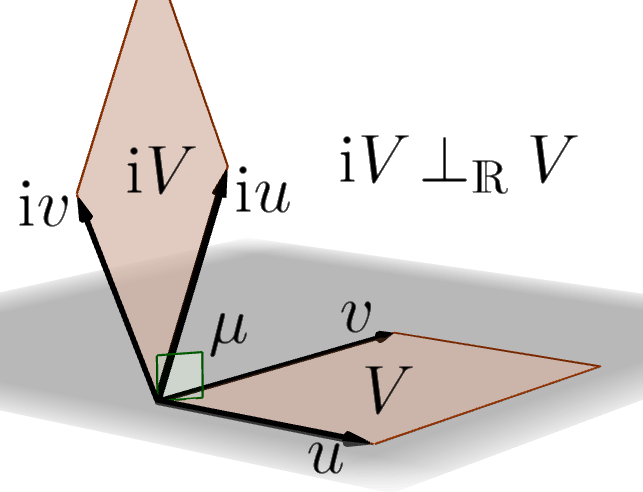}
		\caption{Totally real: $\im$ rotates $V$ $\R$-orthogonally \wrt itself.}
		\label{fig:total real}
	\end{subfigure}
	\caption{Degrees of holomorphy of $V=\Span_\R\{u,v\} \subset (\C^n)_\R$, depending on how the complex structure $\im$ rotates it. To properly represent the relation between the real planes $V$ and $\im V$, we only show the part of $\im V$ inside the parallelogram $\PP(\im u,\im v)$.}
	\label{fig:holomorphy}
\end{figure}

See \Cref{fig:holomorphy}. The following interpretation of these concepts is well known, but we include a simple proof for the sake of completeness:
\begin{proposition}
	Let $V \subset (\C^n)_\R$ be a real subspace.
\begin{enumerate}[label = (\roman*)]
	\item $V$ is holomorphic $\Leftrightarrow$ $V$ is the underlying real space of a complex subspace.\label{it:holomorphic}
	
	\item $V$ is purely real $\Leftrightarrow V$ does not contain a holomorphic subspace $\Leftrightarrow$ any $\R$-basis $(v_1,\ldots,v_p)$ of $V$ is $\C$-linearly independent. \label{it:purely real}
	\SELF{$\Leftrightarrow$ the elements of an $\R$-basis of $V$ are $\C$-linearly independent.}
	
	\item $V$ is totally real $\Leftrightarrow \inner{\cdot,\cdot} = \inner{\cdot,\cdot}_\R$ on $V$. \label{it:totally real}
	\CITE{Goldman1999 p.35}
\end{enumerate}
\end{proposition}
\begin{proof}
	\ref{it:holomorphic} $V = \im V \Leftrightarrow V$ is $\C$-closed.
	\ref{it:purely real} No subspace of $V$ is $\C$-closed $\Leftrightarrow V \cap \im V = \{0\} \Leftrightarrow 2\dim_\C \left(\Span_\C\{v_1,\ldots,v_p\}\right) = \dim_\R (V + \im V) = \dim_\R V + \dim_\R \im V = 2p$.
	\ref{it:totally real} Follows from  \eqref{eq:inner}.
\end{proof}

The lack of holomorphy of a real plane $V$ is usually measured by a
\emph{Kähler or holomorphy angle} 
\cite{Goldman1999,Rosenfeld1997,Scharnhorst2001}
	\CITE{Goldman p.17,36 holomorphy angle $\mu$ \\ Rosenfeld p.182 holom $\alpha$ \\ Scharnhorst Kahler $\Theta_K$}
given, for an $\R$-orthogonal basis $\{u,v\}$, 
by $\mu_V = \cos^{-1} \frac{|\inner{\im u, v}_\R|}{\|u\|\|v\|} = \gamma_{u,v}$.
\OMIT{\eqref{eq:inner}}	
It equals both principal angles (see \Cref{Angles between subspaces}) of $V$ and $\im V$ (\Cref{fig:pure real}).
To generalize it for higher dimensional subspaces, we use the following result:

\begin{proposition}\label{pr:Kahler}
	If $\theta_1\leq\cdots\leq \theta_p$ are the principal angles a real $p$-subspace $V \subset (\C^n)_\R$ makes with $\im V$ then:
	\begin{enumerate}[label = (\roman*)]
		\item $\theta_{2j-1} = \theta_{2j}$ for $1\leq j \leq \lfloor \frac p2 \rfloor$, and $\theta_p = \frac\pi2$ if $p$ is odd. \label{it:repeated thetas}
		
		\item $V$ and $\im V$ have principal bases $(v_1,\ldots,v_p)$ and $(-\im v_2, \im v_1, \ldots, -\im v_p, \im v_{p-1})$ if $p$ is even, or $(-\im v_2, \im v_1, \ldots, -\im v_{p-1}, \im v_{p-2}, \im v_p)$ if $p$ is odd. \label{it:principal bases}
	\end{enumerate}
\end{proposition}
\begin{proof}
	If $p=1$ or $\theta_1 = \frac\pi2$ then $V \perp_\R \im V$ and the result follows.
	Assuming otherwise,
	choose principal vectors $v_1 \in V$ and $\im v \in \im V$ with $\theta_{v_1,\im v} = \theta_1 \neq \frac\pi2$.
	As $P_V(\im v) = v_1 \cos \theta_1 \neq 0$ and $\im$ is a $\frac\pi2$ rotation, we have
	$v \perp_\R v_1$, 
		\OMIT{$\im v\perp_\R v \in V$, so $P_V(\im v) \perp_\R v$}
	$\im v \perp_\R \im v_1$
	and	$\theta_{-v,\im v_1} = \theta_{v_1,\im v} = \theta_1$.
	By \eqref{eq:theta_j}, $\theta_2 = \theta_1$ and we can take
	$v_2 = -v$ and $\im v_1$ as its principal vectors. 
	As $U = \Span_\R\{v_1,v_2\}$ and $\im U = \Span_\R\{-\im v_2, \im v_1\}$ are spanned by associated principal vectors,
	$V' = U^\perp \cap V \perp_\R \im U$ and $\im V' = (\im U)^\perp \cap\,\im V \perp_\R U$.
	The result follows via induction: if it holds for $V'$, it holds for $V = U \oplus V'$.
\end{proof}

\begin{definition}
	The \emph{Kähler angles}%
		\footnote{The \emph{multiple Kähler angle} of \cite{Tasaki2001} is formed by the $\mu_j$'s but excluding $\mu_h$ if $p$ is odd, and using $V^\perp$ if $n < p \leq 2n$, so it has slightly more complicated properties.}
	of $V$ are $\mu_j = \theta_{2j-1}$ for $1\leq j \leq h = \lceil \frac p2 \rceil$,
	and the \emph{holomorphy index} of $V$ is 
	$\rho_V = \prod_{j=1}^h \sin \mu_j$.
		\FUTURE{Definir ângulo de holomorfia generalizado $\mu = \sin^{-1} \rho_V$?}
\end{definition}

As $\mu_1\leq\cdots\leq\mu_h$ are the principal angles of $V$ and $\im V$, minus the repetition of \ref{it:repeated thetas}, we have that $V$ is:
\begin{itemize}
	\item holomorphic $\Leftrightarrow \mu_j = 0 \ \forall j$;
	\item purely real $\Leftrightarrow \mu_j \neq 0 \ \forall j$;
	\item totally real $\Leftrightarrow \mu_j =\frac\pi2 \ \forall j$.
\end{itemize}
Therefore,
\begin{itemize}
	\item $\rho_V = 0 \Leftrightarrow V$ contains a holomorphic subspace;
	\item $\rho_V \neq 0 \Leftrightarrow V$ is purelly real;
	\item $\rho_V = 1 \Leftrightarrow V$ is totally real.
\end{itemize}
By \ref{it:principal bases}, $V = V_1 \oplus \cdots \oplus V_h$ for $V_j = \Span_\R\{v_{2j-1},v_{2j}\}$ ($V_h = \R v_p$ if $p$ is odd), with $V_j \perp_\R V_k$ and $V_j \perp_\R \im V_k$ for $j \neq k$,
and $\mu_j$ is the usual Kähler angle of $V_j$ ($\mu_h = \frac\pi2$ if $p$ is odd).
	\SELF{$\mu_j$ is also isocliny angle of $V_j$ \wrt $\im V$ (even $j=h$), and of $\C v_{2j-1}$ \wrt $\C v_{2j} \oplus (V \cap V_j^\perp) \oplus \im(V \cap V_j^\perp)$}

In \Cref{Angles between subspaces} we study and interpret the following angle, relating it to principal angles and to the exterior product of blades:

\begin{definition}
	The \emph{disjointness angle} of $V,W \subset \F^n$ is $\Upsilon_{V,W} = \sin^{-1} \frac{\|P_{W^\perp} B\|}{\|B\|}$, where $B$ is a blade with $[B]=V$ and $P_{W^\perp}$ is the orthogonal projection on $W^\perp$.
\end{definition}

For a real subspace $V \subset (\C^n)_\R$ and $W = \im V$, the angle is linked to the holomorphy index $\rho_V$, which then relates real and complex blades:

\begin{proposition}\label{pr:blades complex real}
	Let $v_1,\ldots,v_p \in \C^n$ and $V=\Span_\R \{v_1,\ldots,v_p\}$. 
	\begin{enumerate}[label = (\roman*)]
		\item $\rho_V^2 = \sin \Upsilon_{V,\im V}$.\label{it:rho Upsilon}
			\SELF{angle computed in $(\C^n)_\R$.}
		
		\item $\|v_1 \cwedge \cdots \cwedge v_p\| = \rho_V \cdot \|v_1 \rwedge \cdots \rwedge v_p\|$. \label{it:blade complex real}
	\end{enumerate}
\end{proposition}
\begin{proof}
	\ref{it:rho Upsilon} Follows from Propositions \ref{pr:upsilon}\ref{it:Upsilon sin} and \ref{pr:Kahler}\ref{it:repeated thetas}.
	\ref{it:blade complex real} Follows from \Cref{pr:norm cwedge}, as \Cref{pr:upsilon}\ref{it:norm wedge Upsilon}%
		\OMIT{and \ref{it:rho Upsilon}} 
	gives
	$\|(v_1 \rwedge \cdots \rwedge v_p) \rwedge (\im v_1 \rwedge \cdots \rwedge \im v_p)\|
	=  \|v_1 \rwedge \cdots \rwedge v_p\| \|\im v_1 \rwedge \cdots \rwedge \im v_p\| \sin \Upsilon_{V,\im V} 
	= \|v_1 \rwedge \cdots \rwedge v_p\|^2 \cdot \rho_V^2$.%
		\SELF{$\VV(v_1,\im v_1,\ldots,v_p,\im v_p) = \rho_V^2 \cdot \VV(v_1,\ldots,v_p)^2$}	
\end{proof}

Other real and complex concepts are also related via $\rho_V$:
e.g., \ref{it:blade complex real} and \eqref{eq:norm 2-blades} give $\sin \gamma_{u,v} = \sin \mu_V \cdot \sin \theta_{u,v}$ \cite[eq.\,2.6]{Goldman1999} for $u,v\in\C^n$ and $V= \Span_\R\{u,v\}$, with the Kähler angle linking Hermitian and Euclidean ones.

\begin{example}\label{ex:index}
	\Cref{tab:exemplos} has results for $V = \Span_\R\{u,v\}$ in various examples.
	Note how nonlinear are the relations between $\rho_V$, $\mu_V$ (or $\mu_j$) and $\Upsilon_{V, \im V}$.
		\SELF{so how far from holomorphic $V$ is depends on how one poses the question}		
	In \Cref{ex:figura3},
	$\mu_V$ is
	the angle in \Cref{fig:exemplo1} between $\im u$ and $V$, between $v$ and $\C u$, and between $\im v$ (not shown) and both $V$ and $\C u$.	
	In \Cref{ex:totally real},
	$\|u \cwedge v\| = \|u \rwedge v\|$ as $V$ is totally real ($\im u$ and $\im v$ are $\R$-orthogonal to it).
	In \Cref{ex:square2}, 
	$u \wedge v = 0$ despite $u \rwedge v \neq 0$ because $V = (\C)_\R$ is holomorphic.
	In \Cref{ex:3 em C3 parte1}, a calculation gives principal angles $(\cong)\, 55^\circ$, $55^\circ$ and $90^\circ$ for $V=\Span_\R\{u,v,w\}$ and $\im V$, 
	so the Kähler angles are $\mu_1 \cong 55^\circ$ and $\mu_2 = 90^\circ$,
	and \Cref{pr:upsilon}\ref{it:Upsilon sin} also gives $\Upsilon_{V,\im V} \cong 42^\circ$, which reflects the total effect of the principal angles on the contraction of 3-volumes orthogonally projected from $\im V$ to $V^\perp$.
\end{example}

\begin{table}[t]
	\centering
	\addtolength{\leftskip} {-1.3cm}
	\renewcommand{\arraystretch}{1}
	\begin{tabular}{cccccc}
		\toprule
		Example & \ref{ex:square}/\ref{ex:square2} & \ref{ex:C3}/\ref{ex:C3 nova} & \ref{ex:figura}/\ref{ex:figura3} & \ref{ex:3 em C3 parte0}/\ref{ex:3 em C3 parte1} & \ref{ex:totally real0}/\ref{ex:totally real}  \\
		\cmidrule(lr){1-1} \cmidrule(lr){2-2} \cmidrule(lr){3-3} \cmidrule(lr){4-4} \cmidrule(lr){5-5} \cmidrule(lr){6-6} 
		$\|u\wedge v\| \ (=|\det M(u,v)| \text{ if } p =n)$ & 0 & 4 & 6 & $\sqrt{18}$ & 20
		\\[3pt]	
		$\|u\rwedge v\| = \|\im u\rwedge \im v\| = \VV(u,v)$ & 4 & $\sqrt{185}$ & 10 & $\sqrt{27}$ & 20
		\\[3pt]	
		$\|u\rwedge \im u \rwedge v \rwedge \im v\| = \|u\wedge v\|^2 = \det G(u,v)$ & 0 & 16 & 36 & 18 & 400
		\\[3pt]	
		$\rho_V = \frac{\|u \cwedge v\|}{\|u \rwedge v\|} = \prod_{j=1}^h \sin \mu_j$ &  0 & 0.294  & 0.6 & 0.816 &  1
		\\[3pt]	
		$\mu_V = \sin^{-1} \rho_V$, or $\mu_j = \theta_{2j-1}$ & $0^\circ$ & $17^\circ$ & $37^\circ$ & $55^\circ, 90^\circ$ & $90^\circ$
		\\[3pt]	
		$\Upsilon_{V, \im V} = \sin^{-1} \frac{\|u \rwedge \im u \rwedge v \rwedge \im v\|}{\|u \rwedge v\| \|\im u \rwedge \im v\|} = \sin^{-1} \rho_V^2$ & $0^\circ$ & $5^\circ$ & $21^\circ$ & $42^\circ$ & $90^\circ$
		\\[3pt]		
		\bottomrule
	\end{tabular}
	\caption{Results for $V = \Span_\R\{u,v\}$ (in example \ref{ex:3 em C3 parte0}/\ref{ex:3 em C3 parte1}, include $w$ everywhere)}
	\label{tab:exemplos}
\end{table}

\section{The 2nd interpretation}\label{sc:2nd interpretation}

Complex blades and determinants can now have simpler interpretations in terms of fractions of volumes, without the extra $\im v_j$'s and squares of before:
\OMIT{\Cref{pr:det vol,pr:Gramian,pr:blade norms}}

\begin{theorem}\label{pr:complex real concepts}
	Let $v_1,\ldots,v_p \in \C^n$ and $V=\Span_\R \{v_1,\ldots,v_p\}$. 
	\begin{enumerate}[label = (\roman*)]
		\item $\|v_1 \cwedge \cdots \cwedge v_p\| = \rho_V \cdot \VV_p(v_1,\ldots,v_p)$. \label{it:blades angle}
		\item $|\det M(v_1,\ldots,v_n)| = \rho_V \cdot \VV_n(v_1,\ldots,v_n)$, for $p=n$.\label{it:det angle}
			\SELF{$= \|v_1 \cwedge \cdots \cwedge v_p\|$ \\ 
				$= \det M_\R(v_1,\im v_1,\ldots,v_n,\im v_n)$}
	\end{enumerate}
\end{theorem}
\begin{proof}
	\ref{it:blades angle} Follows from Propositions \ref{pr:blades complex real}\ref{it:blade complex real} and \ref{pr:blade norms}\ref{it:real blade}.
	\ref{it:det angle} Follows from \ref{it:blades angle}, \eqref{eq:norm} and $\det G = |\det M|^2$.
		\SELF{$\det G(v_1,\ldots,v_p) = \rho_V^2 \cdot \VV_p(v_1,\ldots,v_p)^2$}
\end{proof}

As $v_1 \cwedge \cdots \cwedge v_p$ can be decomposed in terms of other vectors in the same complex subspace, but which $\R$-span a different $V$, any change in the holomorphy index must be compensated by another in the volume.
If $(v_1,\ldots,v_p)$ is an $\R$-orthonormal basis of $V$, the index can be computed by $\rho_V  = \|v_1 \cwedge \cdots \cwedge v_p\|$, or by $\rho_V = |\det M(v_1,\ldots,v_n)|$ if $p=n$.

While determinants of linear transformations of $\R^n$ describe only the scaling of $n$-volumes, those of $\C^n$ they give information about $2n$-volumes (\Cref{pr:det scale}) and also $n$-volumes:

\begin{corollary}\label{pr:T holomorphy}
	For a $\C$-linear $T:\C^n\rightarrow\C^n$
	and a measurable subset $E$ of a real $n$-subspace $V \subset (\C^n)_\R$
	we have
	$\rho_{T(V)} \cdot \VV_n(T(E)) = |\det T| \cdot \rho_V \cdot \VV_n(E)$.
\end{corollary}
\begin{proof}
	By linearity, $T$ scales uniformly all $n$-volumes of $V$,
	so we can take $E = \PP(v_1,\ldots,v_n)$
	for an $\R$-basis $(v_1,\ldots,v_n)$ of $V$,
	and $T(E) = \PP(Tv_1,\ldots,Tv_n)$.
	By \eqref{eq:outermorphism},
	$\|(Tv_1)\wedge\cdots\wedge(Tv_n)\| = |\det T| \cdot \|v_1\wedge\cdots\wedge v_n\|$, so the result follows from \Cref{pr:complex real concepts}\ref{it:blades angle}.
	\OMIT{Ou: \Cref{pr:complex real concepts}\ref{it:det angle} gives $\rho_{T(V)} \cdot \VV_n(T(E)) 
		= |\det M(Tv_1,\ldots,Tv_n)| 
		= |\det T| \cdot |\det M(v_1,\ldots,v_n)| 
		= |\det T| \cdot \rho_V \cdot \VV_n(E)$.}
\end{proof}

So, $|\det T|$ is the product of the factors by which $n$-volumes and holomorphy indices are scaled by $T$, for real $n$-subspaces $V$ with $\rho_V \neq 0$. 
By $\C$-linearity, if $V$
\SELF{Holds for any $\dim_\R V$}
is holomorphic so is $T(V)$,
and if $T$ is invertible 
\OMIT{without invertibility, $T$ can even map totally real subspaces into complex lines}
and $V$ is purely real so is $T(V)$.
We now have more details about $n$-dimensional holomorphy changes:
e.g., if $V$ is totally real then $\VV_n(T(E)) \geq |\det T| \cdot\VV_n(E)$, with equality if, and only if, $T(V)$ is totally real.
A non-invertible $T$ collapses $2n$-volumes, but we can have
$\VV_n(T(E)) \neq 0$ if $T(V)$ is not be purely real.

\begin{example}
	Let 
	$T = 
	\begin{psmallmatrix}
		1 & \im \\
		0 & 0
	\end{psmallmatrix}$ in the canonical basis $(e_1,e_2)$ of $\C^2$.
	It collapses 4-volumes,
	but 
	$T(\PP(e_1,e_2))$ can have area $\VV_2(Te_1,Te_2) = \VV_2(e_1,\im e_1) = 1 \neq 0$
	since $T(\Span_\R\{e_1,e_2\}) = \Span_\R\{e_1,\im e_1\} = (\C e_1)_\R$ is not purely real.
\end{example}

\begin{example}\label{ex:complex T 2}%
		\OMIT{\Cref{ex:complex T,ex:orientations,ex:complex T 2}, (\ref{ex:blade norms} removido)}
	In \Cref{ex:complex T},
	the image by $T$ of the unit square $\PP(e_1,e_2)$ has area $\VV_2(T e_1, T e_2) = \sqrt{G_\R(T e_1, T e_2)} =
	\begin{vsmallmatrix}
		8 & 2-2\sqrt{3} \\
		2-2\sqrt{3} & 6
	\end{vsmallmatrix}^\frac12 \cong 6.77$,
		\OMIT{$= \sqrt{32+8\sqrt{3}}$}
	so the image of the totally real $U=\Span_\R\{e_1,e_2\}$ has 
	$\rho_{T(U)} \cong \frac{4\cdot 1 \cdot 1}{6.77} \cong 0.59$, by \Cref{pr:T holomorphy}.
	For $v_1=(1-\im\sqrt{3},0)$ and $v_2=(0,1+\im)$
	we find 
	$Tv_1=(4\im,4)$, $T v_2 = (3\im-1,1-\im)$,
	$\VV_2(v_1,v_2)=2\sqrt{2}$
	and $\VV_2(T v_1, T v_2)
	= 8\sqrt{2} = |\det T|\cdot \VV_2(v_1,v_2)$,
		\OMIT{$ = 
			\begin{vsmallmatrix}
				32 & 16 \\
				16 & 12
			\end{vsmallmatrix}^\frac12$}
	and as $V=\Span_\R\{v_1,v_2\}$ is totally real so is $T(V)$ (indeed, $\im Tv_1 \perp_\R Tv_2$ and $\im Tv_2 \perp_\R Tv_1$).
\end{example}

\section{Graphical representations}\label{sc:Visual representations}

Now we discuss how to represent complex orientations, vectors and blades in ways that reflect adequately our results, and how to add blades graphically.

\subsection{Orientations.}\label{sc:Representing orientations}

In the real case, the orientation of a line/plane/space is usually indicated by a straight/curved/helix arrow, the idea being that it suggests the movement of a point/pointer/screw. 
Given any unordered basis, by ``following the movement'' one finds an order (or sign, for a single vector basis) corresponding to the chosen orientation.
	
Complex orientations can differ by more than just signs or base reorderings, so we must actually present an ordered basis with the chosen orientation.
Its order can be still be specified by some arrow, or by numbering its vectors.
For a complex line, we can also show an identification with $\C$, as in \Cref{fig:complex_orientations}.

\subsection{Vectors.}\label{sc:Representing vectors}

Any $0 \neq v \in \R^n$ carries 3 data pieces: a real line $V = \R v$ with a real orientation and a value $\|v\|$. 
As a graphical representation, the segment $\PP(v)$ is enough to determine $V$ and $\|v\|$. The orientation is indicated by an arrow at an extremity, but any ornament works to distinguish $v$ and $-v$.
	\SELF{A dot at the other extremity, to indicate the base point, would work just as well.}

Likewise, $0\neq v \in \C^n$ determines a complex line $V = \C v$ with a complex orientation and a value $\|v\|$.
But now $\PP(v)$ is not enough to determine $V = \Span_\R\{v,\im v\}$, unless one knows how the complex structure rotates $v$.
A solution is to use, instead of an arrow, a hook pointing in the direction of $\im v$, allowing one to locate $V$ and orient it via identification with $\C$ (\Cref{fig:complex vectors}).

\begin{figure}
	\centering
	\begin{subfigure}[b]{0.47\textwidth}
		\centering
		\includegraphics[width=.9\textwidth]{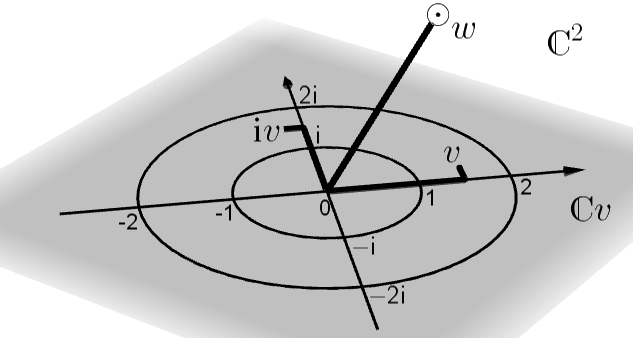}
		\caption{Complex vectors as hooked segments.}
		\label{fig:complex vectors}
	\end{subfigure} 
	\hspace{2mm}
	\begin{subfigure}[b]{0.47\textwidth}
		\centering
		\includegraphics[width=\textwidth]{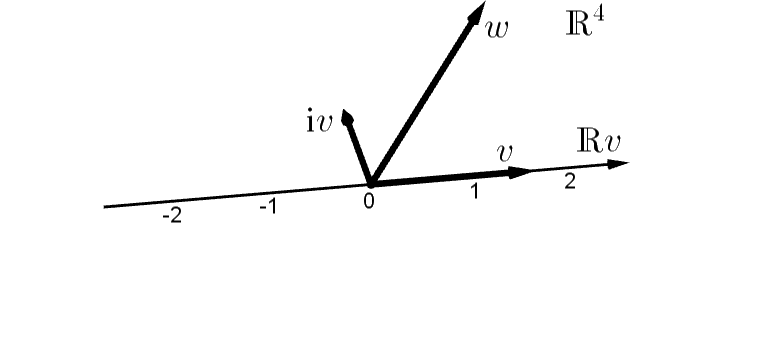}
		\caption{Real vectors as arrowed segments.}
		\label{fig:complex_vectors_real}
	\end{subfigure}
	\caption{Complex and real vectors. (a) The hook of $v$ points in the direction of $\im v$, determining $\C v \cong \Span_\R\{v,\im v\}$ with complex orientation shown via identification with $\C$. The length of $v$ is the value $\|v\|$ associated to $\C v$. Mimicking a common convention, the hook of $w$ shown as $\odot$ (\resp $\ominus$) is towards (\resp away from) a ``viewer looking down from the $4^\text{th}$ dimension'' of $\C^2 \cong \R^4$, so  the $2^\text{nd}$ imaginary component of $\im w$ is positive (\resp negative).
	(b) As a real vector in the underlying $\R^4$, $v$ determines and orients a real line $\R v$, associating the value $\|v\|$ to it.}
	\label{fig:complex x real vectors}
\end{figure}

Representing real and complex vectors differently can also help distinguish when $v$ is to be considered as a complex vector or as a real one in the  underlying real space (\Cref{fig:complex_vectors_real}).
If there is no need for this or to locate $\C v$, we can revert to representing all vectors as arrows, for simplicity.

\subsection{Blades.}\label{sc:Representing complex blades}

As seen,
	\OMIT{\Cref{sc:blades}}
only in the real case $B = v_1 \cwedge \cdots \cwedge v_p \neq 0$ can be represented (non-uniquely) by the oriented parallelotope $\PP = \PP(v_1,\ldots,v_p)$.
In the complex case, \Cref{pr:complex real concepts}\ref{it:blades angle} suggests we represent $B$ instead as a fraction%
	\footnote{As a 1-blade, any $0 \neq v \in \C^n$ is still represented by the whole $\PP(v)$, since $\rho_{\R v}=1$.}
$\rho_V$ of $\PP$, for $V=\Span_\R \{v_1,\ldots,v_p\}$ (\Cref{fig:complex blades}).
This representation reveals not only $B$, but also the holomorphy of $V$ and the volume of $\PP(v_1,\im v_1,\ldots,v_p,\im v_p)$: by \Cref{pr:blade norms}\ref{it:complex blade}, it is the volume of the Minkowski sum%
	\footnote{The Minkowski sum of $\PP(u_1,\ldots,u_k)$ and $\PP(v_1,\ldots,v_l)$ is $\PP(u_1,\ldots,u_k,v_1,\ldots,v_l)$.}
of the parallelotope fraction representing $B$ and a copy $\R$-orthogonal to it.
Note that $[B] = \Span_\C \{v_1,\ldots,v_p\} \cong V\oplus \im V$ is not the real subspace of $\PP$, 
and $B$ can also be represented by fractions (of same volume $\|B\|$) of other parallelotopes in $[B]$ (even outside $V$), as long as they admit the same complex orientation.

\Cref{pr:T holomorphy} now gives another interpretation for $|\det T|$, as the scaling factor of parallelotope fractions representing complex $n$-blades.

\begin{example}\label{ex:figura6}%
	Let $B = u\wedge v = u \wedge v' = u'' \wedge v''$, with $u$ and $v$ as in \Cref{ex:figura3}, $v' = 3 e_2$, $u'' = 3 e_1$ and $v'' = (3+6\im) e_1 + 2 e_2$.
	\Cref{fig:complex blades} represents $B$ as different fractions (of same area $\|B\|=6$) of the parallelograms of real blades $u\rwedge v \neq u \rwedge v' \neq u'' \rwedge v''$.
	The complex vector hooks 
	remind us that $[B] = \Span_\C\{u,v\} \cong \Span_\R\{u,\im u,v,\im v\}$ is not the real plane of the parallelogram.
	Curved arrows show the order of the basis $(u,v)$ giving the complex orientation of $[B]$.	
	Not every parallelogram fraction of area $6$ in $[B]$ represents $B$, due to complex orientations: e.g., \Cref{fig:exemplo2} represents $\im B$, not $B$.
	The parallelograms $\PP$ of the real blades have different areas, but each forms a different angle with $\im \PP$ (not shown), so that each $\PP(u,\im u,v,\im v)$ has 4-volume $\|B\|^2=36$.

	In \Cref{ex:totally real}, $V = \Span_\R\{u,v\}$ is totally real, so $\PP(u,v)$ can represent both $u \rwedge v = 20 f_{24}$ and $u \cwedge v  = -20 e_{12}$,
	but $[u \rwedge v] = V  = \Span_\R\{f_2,f_4\}$ with real orientation of $f_{24}$,
	and $[u \cwedge v] = \Span_\C \{u,v\} = \C^2$ with complex orientation opposite that of $e_{12}$.
	In \Cref{ex:square2}, $\Span_\R\{u,v\}$ is holomorphic, so $u \wedge v = 0$ is a zero fraction of $S$.
	In \Cref{ex:C3 nova,ex:3 em C3 parte1}, $u \wedge v \wedge w$ and $u \wedge v$ correspond, respectively, to $81.6\%$ of $\PP(u,v,w)$ and $29.4\%$ of $\PP(u,v)$, by \Cref{tab:exemplos}.
\end{example}

\begin{figure}
	\centering
	\begin{subfigure}[b]{0.27\textwidth}
		\centering
		\includegraphics[width=.85\textwidth]{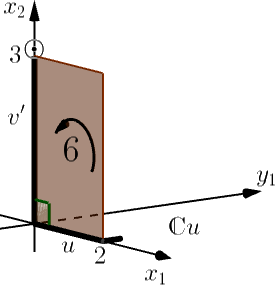}
		\caption{$\Span_\R\{u,v'\}$ is totally real, so $B$ is shown as the whole $\PP(u,v')$.}
		\label{fig:exemplo4}
	\end{subfigure} 
	\hspace{.05mm}
	\begin{subfigure}[b]{0.27\textwidth}
		\centering
		\includegraphics[width=.9\textwidth]{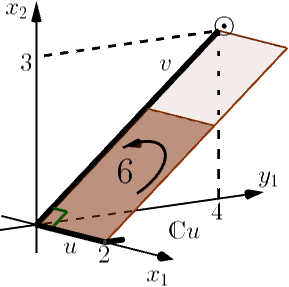}
		\caption{$V=\Span_\R\{u,v\}$ has $\rho_V = 0.6$, so $B$ is shown as 60\% of $\PP(u,v)$.}
		\label{fig:exemplo5}
	\end{subfigure} 
	\hspace{.05mm}
	\begin{subfigure}[b]{0.42\textwidth}
		\centering
		\includegraphics[width=\textwidth]{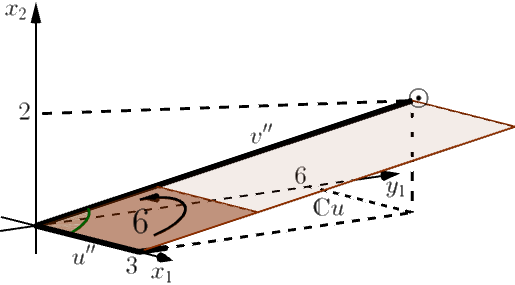}
		\caption{$\Span_\R\{u'',v''\}$ is closer to the complex line $\C u$, so $B$ is an even smaller fraction of $\PP(u'',v'')$.}
		\label{fig:exemplo6}
	\end{subfigure}
	\caption{Different representations of the same complex blade $B$, of \Cref{ex:figura6}, as oriented fractions of parallelograms (shown darker than the whole parallelograms).}
	\label{fig:complex blades}
\end{figure}

\subsection{Graphical sums.}

Blades $A,B \in \bigwedge^2 \F^n$ with $[A] \cap [B] \neq \{0\}$ decompose as $A = a \wedge c$ and $B= b \wedge c$ with $c \in [A]\cap[B]$ and $a,b \in \F^n$, which can be chosen $\F$-orthogonal to $c$.
In the real case, $\PP(a,c)$ and $\PP(b,c)$ represent $A$ and $B$, and a representation of $A+B = (a+b)\wedge c$ is  obtained adding $a + b$ graphically and then taking $\PP(a+b,c)$.

In the complex case, if $a$ and $b$ are $\C$-orthogonal to $c$ then $\Span_\R\{a,c\}$, $\Span_\R\{b,c\}$ and $\Span_\R\{a+b,c\}$ are totally real,
so $A$, $B$ and $A+B$ are again represented by $\PP(a,c)$, $\PP(b,c)$ and $\PP(a+b,c)$.
But if not, then $A$ and $B$ correspond to fractions whose graphical sum does not represent $A+B$,
and one must add graphically $\PP(a,c)$ and $\PP(b,c)$ to find $\PP(a+b,c)$, compute the holomorphy index of $\Span_\R\{a+b,c\}$, and then take the corresponding fraction.

The idea extends to blades $A,B \in \bigwedge^p \C^n$ with $\dim_\C ([A] \cap [B]) = p-1$,
decomposed as $A = a \wedge C$ and $B= b \wedge C$ with $C \in \bigwedge^{p-1}([A]\cap [B])$, which can always be represented by a parallelotope in a totally real subspace of $([A]\cap[B])_\R$.

\begin{example}\label{ex:sum}
	A graphical subtraction of the darker parallelograms in \Cref{fig:exemplo4,fig:exemplo5}, both representing $B$, would erroneously give a nonzero result.
	Subtracting the parallelograms of $u \rwedge v$ and $u\rwedge v'$ gives another representing $u \rwedge (v-v') = u \rwedge 2\im u \neq 0$  in $\C u$,
	and a fraction $\rho_{\C u}=0$ of it represents, correctly, $B-B = u \cwedge 2\im u = 0$.
 
	The whole parallelogram of \Cref{fig:exemplo2} represents  $\im B = \im u \wedge v$, since $\Span_\R\{\im u,v\}$ is totally real.
	The graphical sum of $u \rwedge v$ and $(\im u)\rwedge v$ gives the larger parallelogram in \Cref{fig:sum}, of area $\|(u+\im u)\rwedge v\| = \sqrt{136}$.
		\OMIT{$v$ projects on $\im u$, not $u+\im u$}
	As $B+\im B = (1+\im) u \wedge v$ has $\|B+\im B\| =  6\sqrt{2}$, it is represented by $\rho = \frac{6\sqrt{2}}{\sqrt{136}} \cong 73\%$ of this parallelogram.
	In \Cref{fig:exemplo4}, $v'$ is $\C$-orthogonal to $u$, so $B= u \wedge v'$ and $\im B = \im u \wedge v'$ can also be represented by $\PP(u,v')$ and $\PP(\im u,v')$, and $B+\im B$ by the whole $\PP(u+\im u,v')$, as in \Cref{fig:sum2}.
\end{example}

\begin{figure}
	\centering
	\begin{subfigure}[b]{0.45\textwidth}
		\centering
		\includegraphics[width=.72\textwidth]{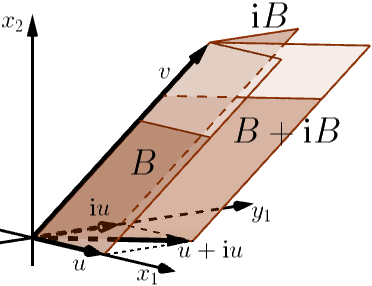}
		\caption{Parallelograms in purely real planes}
		\label{fig:sum}
	\end{subfigure} 
	\hspace{.5mm}
	\begin{subfigure}[b]{0.45\textwidth}
		\centering
		\includegraphics[width=.5\textwidth]{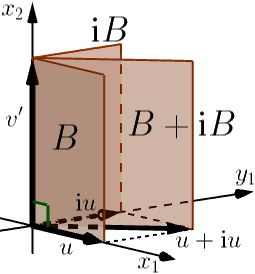}
		\caption{Parallelograms in totally real planes}
		\label{fig:sum2}
	\end{subfigure}
	\caption{Graphical sums giving $B+\im B$, from \Cref{ex:sum}.}
	\label{fig:sums}
\end{figure}

\section{Clifford algebras and Hermitian geometry}\label{sc:Complex numbers and Geometric Algebra}

Our results facilitate applying complex Grassmann algebras to Hermitian geometry.
Contractions (interior products) and regressive products also work well \cite{Mandolesi_Contractions,Mandolesi_Contractions2}.
We now discuss whether the whole apparatus of Clifford algebras can be used to study general Hermitian spaces.

Specifically, we would like to know if this can be done in a practical way, and preserving the isotropy of such spaces: no canonical isomorphism with $\C^n$, no natural concept of complex conjugation, and no distinction between real or imaginary vectors.
This is important for some applications:
e.g., in quantum theory, vectors differing by a complex phase represent the same physical state, so there are no distinctly real or imaginary vectors;
in gauge theory, complex gauge transformations might not preserve real structures (decompositions of complex spaces into real and imaginary parts, or, equivalently, antilinear involutions working as complex conjugation).

This requirement excludes Hermitian Clifford Analysis \cite{Brackx2008,Sabadini2014}, which uses complex conjugation to define a Hermitian product on the Clifford algebra of $\C^n$,
and Unitary Geometric Algebra \cite{Sobczyk1993,Sobczyk2012unitary}, whose complex vectors $x+\im y$, for $x,y \in \R^n$ and a real Clifford algebra element $\im$ of square $-1$,
have discernible real and imaginary parts ($x$ and $\im y$ have different grades).
	\CITE{Sobczyk2013 does the same; Sobczyk1993 uses a bivector as $\im$; Sobczyk2001universal uses complex algebra} 
Other methods \cite{Hrdina2022,Marchuk2008,Sobczyk2001universal,Stoica2018} have similar problems.

The geometry of Euclidean spaces is easily linked to real Clifford algebras because the real inner product and the scalar part of the Clifford one can be made equal, $\inner{u,v} = (uv)_0$.
This fails in the Hermitian case, as the inner product is sesquilinear and the Clifford one is $\C$-bilinear.
Putting both on a complex space would require a compatibility condition to ensure they describe the same geometry,
and though $\Real\inner{u,v} = \Real(uv)_0$ might seem natural, it does not work, as $\Real\inner{\im u,\im v} = \Real\inner{u,v}$ while
$\Real\left((\im u)(\im v)\right)_0 = - \Real(uv)_0$.
In fact, any complex Clifford product breaks the isotropy: we can identify a vector $v$ as real if $v^2 > 0$, or imaginary if $v^2<0$.

Geometric algebraists might say that, instead of Hermitian geometry, one should use the geometry of the underlying real space.
As Hestenes recast real Clifford algebras as Geometric Algebra (GA), he rejected using $\C$ as a scalar field (\hspace{1sp}\cite[pp.\,9--11]{Hestenes1986unified}, \cite[pp.\,xii--xiii]{Hestenes1984}), seeing it as just a subalgebra of GA, like $\mathit{Cl}^+(\R^2)$.
He hoped that GA as a universal language would break barriers between various areas of Mathematics and Physics, and free students from learning different formalisms like complex numbers, quaternions, matrices, etc.
Lack of a geometric interpretation of complex blades may have also played a role in his view, still shared by many and reinforced by the fact that Geometric Calculus extends Complex Analysis, and complex elements of Quantum Theory, like spinors, were reformulated using GA.

But a specialized tool or formalism, that exploits special features of a problem, can at times be preferable to an all-purpose one.
And, as the algebraic closure of $\R$, it is natural for $\C$ to emerge in real formalisms:
e.g., in the invariant decomposition of a real bivector \cite{Roelfs2023}.
Even Hestenes admits complex roots in \cite[p.\,74]{Hestenes1984}, albeit noting that they could be identified with multivectors.
But possible does not necessarily mean convenient.

Rewriting a complex formalism in real terms is not always an advantage, as it can become untidy (e.g., imagine a widespread use of 
$\begin{psmallmatrix}
	x & -y \\
	y & x
\end{psmallmatrix}$ instead of $x+\im y$).
Hermitian geometry can indeed be done in the underlying real space with a complex structure, but it loses elegance and efficiency, even with the use of GA.
The dimension of the space doubles, squaring that of its Clifford or Grassmann algebras, as in the examples of \Cref{sc:blades}.
In attempts to express complex concepts via GA,
the language quickly becomes cumbersome
(e.g., a ``complex $p$-subspace of $\C^n\,$'' becomes a ``real $2p$-subspace of $\R^{2n}$ that is closed under an action of $\mathit{Cl}^+(\R^2)$''),
and to simplify it one ends up reverting to something akin to the complex formalism, only with different words.

Hopefully, our results can be a step towards understanding if, when or how Clifford algebras can be used to study complex geometry, or perhaps they might point the way towards an alternative formalism.

\section{Conclusion}\label{sc:Conclusion}

Here we summarize our main results and discuss possible developments.

\subsection{Main results}

The norm of a complex blade $B = v_1 \wedge \cdots \wedge v_p$, for $v_1,\ldots,v_p \in \C^n$, has been related to volumes in two different ways:
\begin{enumerate}[label = (\roman*)]
	\item it is a fraction $\rho_V$ of the $p$-volume of the parallelotope $\PP = \PP(v_1,\cdots,v_p)$, where $\rho_V$ is the holomorphy index of $V=\Span_\R\{v_1,\ldots,v_p\}$;\label{interpr i}
	\item its square is the $2p$-volume of $\hat{\PP} = \PP(v_1,\im v_1,\cdots,v_p, \im v_p)$, the parallelotope spanned by those vectors and their rotations by $\im$.\label{interpr ii}
\end{enumerate}
When $p=n$, \ref{interpr i} and \ref{interpr ii} also hold for $|\det M|$, where $M = M(v_1,\ldots,v_n)$.

\ref{interpr i} is our main result, allowing us to represent complex blades like real ones, except that $B$ corresponds to only a fraction of $\PP$, smaller the closer $V$ is to becoming or containing a holomorphic subspace.

\ref{interpr ii} may seem less natural, but was easier to prove. By definition, $\|B\|^2$ is a Gram determinant, which equals $|\det M|^2$ if $p=n$ (we can assume it by reducing the ambient space to $V$).
And for the determinant we gave many proofs, some direct but abstract, others more geometric: e.g., using shears to turn $\hat{\PP}$ into a box and diagonalize $M$ (up to a reordering of columns) while preserving the volume and $|\det M|$.

\ref{interpr i} and \ref{interpr ii} are connected by the fact that $\hat{\PP}$ is the Minkowski sum of $\PP \subset V$ and its rotated copy $\im \PP \subset \im V$, so its $2p$-volume is the product of the equal $p$-volumes of $\PP$ and $\im \PP$ times the sine of the disjointness angle of $V$ and $\im V$. The role of this sine, which equals $\rho_V^2$, is to project the volume of $\im \PP$ on the orthogonal complement of $V$.

The index $\rho_V$ measures the lack of holomorphy of $V$ in terms of how much the complex structure displaces it: a holomorphic $V$ is not displaced, rotating inside itself, so $\rho_V = 0$; the maximum $\rho_V = 1$ occurs for a totally real $V$, which rotates $\R$-orthogonally \wrt itself.
One can compute $\rho_V$ via the disjointness angle, the Kähler angles of $V$ (the principal angles of $V$ and $\im V$, minus the inherent duplicates), or as a ratio of norms of complex and real blades formed by $v_1,\ldots,v_p$.

The argument of the complex determinant of a matrix $M \in GL(\C^n)$ has been interpreted as a phase difference, the change of complex orientation caused by $M$.
There are various equivalent ways to consider the complex orientation of a complex $p$-subspace $V$, most notably as:
\begin{itemize}
	\item an element in the unit circle of $\bigwedge^p V \cong \C$ (so, a unit $p$-blade $B$ in $V$);
	
	\item an isometric isomorphism between $\bigwedge^p V$ and $\C$, with the ray through $B$ mapped to the positive real axis;
	
	\item an equivalence class of (orthonormal) ordered bases of $V$ differing by transformations of positive determinant, with $B$ being the normalized blade formed by the basis vectors.
\end{itemize} 
The phase difference $\varphi$ between two orientations is the angle between the circle elements or rays,
and the argument of the determinant of a transformation between bases in their classes.
The transformation can be decomposed into elementary ones, and only phase rotations%
\footnote{Complex transpositions or generalized complex line reflections are phase rotations by $\pi$.}
affect orientations, 
so that $\varphi$ is just the sum (mod $2\pi$) of their angles.

\subsection{Possible developments}

Our results may open new ways to apply complex Grassmann algebras to complex geometries (Hermitian, Kähler, algebraic, etc.).
Traditionally, complex spaces are seen as naturally oriented (a real orientation given by the complex structure), and forms used on them are based on a complexification of the exterior algebra of the underlying real space \cite{Goldman1999,Griffiths1994}.
This approach has been very successful, but complex orientations and forms based directly on the complex exterior algebra might, in light of our results, be also useful for some problems. 
Promising signs are the complex volumetric Pythagorean theorems \cite{Mandolesi_Pythagorean}, 
and Hodge-like star operators obtained via complex orientations in \cite{Mandolesi_Contractions}, arguably simpler and more well suited for complex geometry than the usual complex Hodge star operator (which acts on complexified forms in the underlying real space).

The geometric interpretation of complex blades in \Cref{pr:blade norms}\ref{it:complex blade} has proven useful \cite{Mandolesi_Pythagorean,Mandolesi_Born,Mandolesi_Grassmann,Mandolesi_Products,Mandolesi_TotalGrassmannian,Mandolesi_Contractions},
and we expect the simpler \Cref{pr:complex real concepts}\ref{it:blades angle} to be even more fruitful, as it relates blade norms to volumes and also the holomorphy index.
This index, a surprising application of the disjointness angle, might help extend to higher dimensional subspaces results involving Kähler angles of real planes \cite{Goldman1999,Rosenfeld1997,Tasaki2001}.
It is useful that the index can be obtained via \Cref{pr:blades complex real}\ref{it:blade complex real}, with no need to compute principal angles.

Complex blades carry information about volumes, complex orientations and subspaces, and as our aim was to extract these pieces of information, we kept them separated.
But for some purposes it can be convenient to combine volume and complex orientation into a complex volume, just as signed volumes are sometimes used in the real case \cite{Cameron2019}.
Indeed, in \cite[Thm.\,5.1]{Mandolesi_Identities} we obtained a nice trigonometric identity for asymmetric angles (which measure volume contraction) by appropriately incorporating (real or complex) orientations of subspaces into them.

As discussed, Clifford algebras seem incompatible with Hermitian spaces, under certain conditions.
The better understanding of how complex Grassmann algebras relate to Hermitian geometry, provided by our results, 
might perhaps show the way towards some generalization or alternative to Clifford algebras, with a new product compatible with the Hermitian one.

\appendix

\section{Angles between subspaces}\label{Angles between subspaces}

In high dimensions, there are different concepts of angle between subspaces, some of which we review here.

Let $V,W\subset \F^n$, $p=\dim V$, $q=\dim W$ and $m=\min\{p,q\}$.
Their separation is described by \emph{principal angles} \cite{Bjorck1973,Galantai2006} $0 \leq \theta_1\leq\cdots\leq \theta_m \leq \frac\pi2$
and associated \emph{principal bases} $(v_1,\ldots,v_p)$ and $(w_1,\ldots,w_q)$, which are orthonormal bases with $\inner{v_j,w_k} = 0$ for $j \neq k$ and $\inner{v_j,w_j} = \cos \theta_j$ for $1\leq j\leq m$.
They can be defined recursively: 
for $1\leq j\leq m$, 
$V_j = \{v\in V: \inner{v,v_k} = 0 \ \forall\, k<j\}$
and
$W_j = \{w\in W: \inner{w,w_k} = 0 \ \forall\, k<j\}$,
let
\begin{equation}\label{eq:theta_j}
	\theta_j = \inf\{\theta_{v,w} : 0\neq v \in V_j,\  0 \neq w \in W_j\},
\end{equation}
and choose as its \emph{principal vectors} unit $v_j \in V_j$ and $w_j \in W_j$
with $\theta_{v_j,w_j} = \theta_j$.
Complete the larger orthonormal basis as needed.
Note that $P_V w_j = v_j \cos\theta_j$, where $P_V$ is the orthogonal projection on $V$.

Principal angles are often combined into various concepts of distance between $V$ and $W$. 
If $p=q$, their \emph{Fubini-Study distance} is the angle
$\theta_{\mathrm{FS}} = \cos^{-1}\big(\prod_{j=1}^p \cos\theta_j\big)$.
It extends for $p \neq q$ as an asymmetric angle%
\footnote{Previously called \emph{Grassmann angle} in \cite{Mandolesi_Grassmann,Mandolesi_Identities}.}
$\Theta_{V,W}$ \cite{Mandolesi_Products,Mandolesi_TotalGrassmannian},
used to define another angle $\Upsilon_{V,W}$, more convenient for our purposes:

\begin{definition}\label{df:angles}
	The \emph{asymmetric angle} of $V$ with $W$ is $\Theta_{V,W} = \cos^{-1} \frac{\|P_W B\|}{\|B\|}$,
	where $B$ is a blade with $[B]=V$ and $P_W$ is the orthogonal projection%
	\footnote{If $B=v_1\wedge \cdots \wedge v_p$ then $P_W B = (P_W v_1)\wedge \cdots \wedge (P_W v_p)$.}
	on $W$.
	Their \emph{disjointness angle} is $\Upsilon_{V,W}=\frac{\pi}{2}-\Theta_{V,W^\perp}= \sin^{-1} \frac{\|P_{W^\perp} B\|}{\|B\|}$.
\end{definition}

By \Cref{pr:blade norms}, $\cos \Theta_{V,W}$ (squared, if $\F=\C$) measures the contraction of $p$-volumes ($2p$, if $\F=\C$) orthogonally projected from $V$ to $W$.
Thus we have $\Theta_{V,W} = 0 \Leftrightarrow V \subset W$, and $\Theta_{V,W} = \frac\pi2 \Leftrightarrow W^\perp \cap V \neq \{0\}$.
In \cite{Mandolesi_Products},
we show that $\Theta_{V,W} = \cos^{-1}\big(\prod_{j=1}^p \cos\theta_j\big)$ if $p \leq q$, and $\Theta_{V,W} = \frac\pi2$ if $p>q$, 
so generally $\Theta_{V,W} \neq \Theta_{W,V}$ for $p \neq q$,
reflecting natural asymmetries between the subspaces.

\begin{proposition}\label{pr:upsilon}
	Let $V$, $W$ and $\theta_j$ be as above, and $A,B \in \bigwedge \F^n$ be blades.
	\begin{enumerate}[label = (\roman*)]
		\item $\sin \Upsilon_{V,W}$ (squared, if $\F=\C$) is the contraction factor of $p$-volumes ($2p$, if $\F=\C$) orthogonally projected from $V$ to $W^\perp$. \label{it:projection Upsilon}
		
		\item $\Upsilon_{V,W}=0 \Leftrightarrow V\cap W\neq\{0\}$, and $\Upsilon_{V,W}=\frac\pi2 \Leftrightarrow V\perp W$.\label{it:Upsilon 0 pi2}
		
		\item $\Upsilon_{V,W} = \Upsilon_{W,V} = \sin^{-1} \big(\prod_{j=1}^m \sin\theta_j\big)$. \label{it:Upsilon sin}
		
		\item $\|A \wedge B\| = \|A\|\|B\| \sin \Upsilon_{[A],[B]}$. \label{it:norm wedge Upsilon}
	\end{enumerate}
\end{proposition}
\begin{proof}
	Follows from the above properties of $\Theta_{V,W}$,
	and in \cite{Mandolesi_Products} we show 
	$\Theta_{V,W^\perp} = \Theta_{W,V^\perp} = \cos^{-1}\big(\prod_{j=1}^m \sin\theta_j\big)$
	and $\|A \wedge B\| = \|A\|\|B\| \cos\Theta_{[A],[B]^\perp}$.
\end{proof}

\begin{figure}
	\centering
	\includegraphics[width=0.5\linewidth]{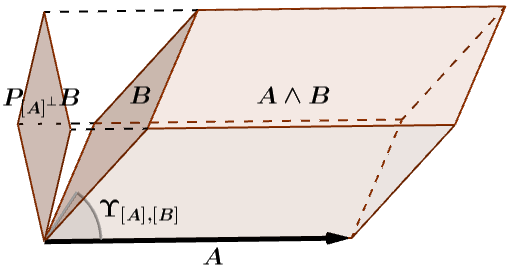}
	\caption{$\sin \Upsilon_{[A],[B]} = \frac{\|P_{[A]^\perp} B\|}{\|B\|}$ is the ratio between the areas of the parallelograms. The volume of the parallelepiped is $\|A\wedge B\| = \| A\|\|P_{[A]^\perp} B\| = \|A\|\|B\|\sin \Upsilon_{[A],[B]}$.}
	\label{fig:angulo}
\end{figure}

See \Cref{fig:angulo}.
While $\Theta_{V,W}$ measures how far $V$ is from being contained in $W$, \ref{it:Upsilon 0 pi2} shows $\Upsilon_{V,W}$ measures how far $V$ and $W$ are from intersecting non-trivially
(how disjoint they are, hence the name).
By \ref{it:Upsilon sin}, $\Upsilon_{V,W}$ tends to be smaller than the smallest principal angle, being quite small if $\theta_j < \frac\pi2$ for many $j$.
This makes sense since, by \ref{it:projection Upsilon}, it reflects volume contraction resulting from various length contractions by the $\sin \theta_j$'s.
In \ref{it:norm wedge Upsilon}, which  generalizes \eqref{eq:norm 2-blades}, $\wedge$ is the exterior product of $\bigwedge \F^n$, so in $\bigwedge (\C^n)_\R$ it must be the $\R$-bilinear $\rwedge$.

Note that $\Upsilon_{V,W}$ is actually an angle in $\bigwedge^p\F^n$, 
between the line $\bigwedge^p V$ and the subspace $(\bigwedge^p(W^\perp))^\perp$.
In the complex case,  $\Upsilon_{V,W} \neq \Upsilon_{V_\R,W_\R}$, 
which can be understood since $\bigwedge^p \C^n \neq \bigwedge^p (\C^n)_\R$,
or via \ref{it:Upsilon sin}, as $V_\R$ and $W_\R$ have the same principal angles as $V$ and $W$, but twice repeated.

\vspace{6pt}

\noindent
\textbf{Note.} This article has been posted to the arXiv e-print repository, with the identifier arXiv:2403.17022

\vspace{6pt}

\noindent
\textbf{Data Availability.} Not applicable.


\end{document}